\newcommand*{\mailto}[1]{\href{mailto:#1}{\nolinkurl{#1}}}
\newcommand{\arxiv}[1]{\href{http://arxiv.org/abs/#1}{arXiv: #1}}
\newtheorem{theorem}{Theorem}[section]
\newtheorem{definition}[theorem]{Definition}
\newtheorem{lemma}[theorem]{Lemma}
\newtheorem{proposition}[theorem]{Proposition}
\newtheorem{corollary}[theorem]{Corollary}
\newtheorem{remark}[theorem]{Remark}
\newcommand{\R}{{\mathbb R}}
\newcommand{\Z}{{\mathbb Z}}
\newcommand{\C}{{\mathbb C}}
\newcommand{\E}{\mathrm{e}}
\newcommand{\I}{\mathrm{i}}
\newcommand{\tr}{\mathrm{tr}}
\newcommand{\loc}{\mathrm{loc}}
\newcommand{\cc}{\mathrm{c}}
\newcommand{\be}{\begin{equation}}
\newcommand{\ee}{\end{equation}}
\newcommand{\nrc}{\gamma}
\newcommand{\OO}{\mathcal{O}}
\newcommand{\oo}{o}
\newcommand{\ledot}{\,\cdot\,}
\newcommand{\redot}{\cdot\,}
\newcommand{\Iso}[1]{\mathrm{Iso}(#1)}
\newcommand{\dip}{\upsilon}
\newcommand{\D}{\mathcal{D}}
\newcommand{\Pe}{\mathcal{P}}
\newcommand{\per}{\ell}
\newcommand{\inds}{\mathcal{I}}
\newcommand{\To}{\mathcal{T}}
\numberwithin{equation}{section}
\begin{document}

\title[Periodic inverse spectral problem]{The inverse spectral problem for periodic conservative multi-peakon solutions of the Camassa--Holm equation}

\author[J.\ Eckhardt]{Jonathan Eckhardt}
\address{Faculty of Mathematics\\ University of Vienna\\ Oskar-Morgenstern-Platz 1\\ 1090 Wien\\ Austria}
\email{\mailto{jonathan.eckhardt@univie.ac.at}}
\urladdr{\url{http://homepage.univie.ac.at/jonathan.eckhardt/}}

\author[A.\ Kostenko]{Aleksey Kostenko}
\address{Faculty of Mathematics and Physics\\ University of Ljubljana\\ Jadranska 19\\ 1000 Ljubljana\\ Slovenia\\ and Faculty of Mathematics\\ University of Vienna\\ Oskar-Morgenstern-Platz 1\\ 1090 Wien\\ Austria}
\email{\mailto{Aleksey.Kostenko@fmf.uni-lj.si};\ \mailto{Oleksiy.Kostenko@univie.ac.at}}
\urladdr{\url{http://www.mat.univie.ac.at/~kostenko/}}

\thanks{{\it Research supported by the Austrian Science Fund (FWF) under Grants No.~P29299 (J.E.) and P28807 (A.K.)}}

\keywords{Inverse spectral problem, periodic multi-peakons, Camassa--Holm equation}
\subjclass[2010]{Primary 34A55, 34B07; Secondary 34L05, 37K15}

\begin{abstract}
  We solve the inverse spectral problem associated with periodic conservative multi-peakon solutions of the Camassa--Holm equation.
  The corresponding isospectral sets can be identified with finite dimensional tori.  
\end{abstract}

\maketitle

\section{Introduction}
 
  In the course of the last few decades, the Camassa--Holm equation
   \begin{equation}\label{eqnCH}
   u_{t} -u_{xxt}  = 2u_x u_{xx} - 3uu_x + u u_{xxx} 
  \end{equation}
  has become one of the most intensively studied partial differential equations. 
  Due to the vast amount of literature devoted to various aspects of this equation, we only refer to a brief selection of articles  \cite{besasz00, bkst, brco07, co97, coes98, como00, cost00, ektIMN, GH08, hora07, le05b, mc04, mc03, xizh00} containing further information. 
     The relevance of the Camassa--Holm equation stems from the fact that it arises as a model for unidirectional wave propagation on shallow water \cite{caho93, cola09, jo02}. 
  Probably the most distinctive feature of this equation however  is that it allows for smooth solutions to blow up in finite time in a way that resembles wave-breaking. 
 This behavior has been described in detail and is known to happen only when the quantity $\omega=u-u_{xx}$ is indefinite; see  \cite{coes98, mc04, mc03}. 
 As opposed to the rather tame definite case (which shows similarities with the Korteweg--de Vries equation \cite{besasz98, le04, mc03b}), sign-changes of $\omega$ inflict serious difficulties (as visible from the expositions in \cite{comc99, ka06, le05} for example). 

On the other side, the Camassa--Holm equation is known to be formally completely integrable in the sense that there is an associated isospectral problem  
\begin{align}\label{eqnISPcla}
 -f'' + \frac{1}{4} f = z\, \omega f, 
\end{align}
where $z$ is a complex spectral parameter.  
As solving corresponding inverse spectral problems is essentially equivalent to solving initial value problems for the Camassa--Holm equation, it is not surprising that the encountered complications due to wave-breaking for indefinite $\omega$ reoccur within this context as well. 
 In fact, results on inverse spectral theory in this case remained rather scarce \cite{besasz00, be04, bebrwe08, bebrwe12, bebrwe15, LeftDefiniteSL, IsospecCH} for some time. 
 Only recently, we proposed a way to overcome these problems by means of generalizing the isospectral problem  \cite{ConservCH, ConservMP, IndefiniteString}, which was motivated by work on the indefinite moment problem of M.\ G.\ Krein and H.\ Langer \cite{krla79}.

  In this article, we are interested in the isospectral problem for the Camassa--Holm equation in the periodic situation. 
  More precisely, we will solve the inverse spectral problem corresponding to {\em periodic conservative multi-peakon solutions} of the Camassa--Holm equation, which can be regarded as one kind of periodic finite gap solution.
  These are solutions of the particular form 
  \begin{align}
    u(x,t) = \sum_{k\in\Z} \sum_{n=1}^N p_n(t) \E^{-|x-q_n(t) - k\per|}
  \end{align}
  so that the corresponding quantity $\omega$ is a Borel measure with discrete support for every fixed time. 
  Of course, due to their lack of regularity, they have to be interpreted as suitable weak solutions. 
  The non-periodic counterpart to these solutions (that is, with vanishing spatial asymptotics) is rather well studied \cite{besasz00, besasz01, coes98, ConservMP, hora07b, wa06} and allows explicit formulas for the appearing coefficients.
    It is remarkable that most of the essential properties of the Camassa--Holm equation are already observable for this class of solutions. 
    
  Previous literature on periodic inverse spectral problems for~\eqref{eqnISPcla} is very scarce and always restricted to the case when $\omega$ is a strictly positive continuous function; see \cite{baklko03, co98, ko04}.
  Somewhat related, smooth finite gap solutions of the Camassa--Holm equation have been studied in \cite{comc99, CH2Real, GH08}.
  In the context of periodic multi-peakon solutions, it has been shown that the coefficient $\omega$ can be recovered in terms of the spectral data, which led to a representation of these solutions in terms of  theta functions  \cite{alcafehoma01, alfe01, besasz02, besasz05}. 
  We will complement these results here by actually solving the corresponding periodic inverse spectral problem via a suitable generalization of the differential equation~\eqref{eqnISPcla}, which directly corresponds to the notion of global conservative solutions of the Camassa--Holm equation in \cite{brco07, hora07}.
  
 A natural phase space for periodic conservative solutions of the Camassa--Holm equation (respectively its two-component generalization) is given by the following definition (compare \cite{grhora13, hora08b}). 
 Here and henceforth in this article, we will always suppose that $\per>0$ is an arbitrary but fixed period length.

 \begin{definition}
The set $\D$ consists of all pairs $(u,\mu)$ such that $u$ is an $\per$-periodic, real-valued function in $H^1_\loc(\R)$ and $\mu$ is an $\per$-periodic, non-negative Borel measure on $\R$ with
\begin{align}\label{eqnmuac}
 \mu(B) \geq \int_B u(x)^2 + u'(x)^2\, dx 
\end{align}
for every Borel set $B\subseteq\R$.
\end{definition}

Given a pair $(u,\mu)$ in $\D$, we define an associated distribution $\omega$ in $H^{-1}_\loc(\R)$ via 
\begin{align}\label{eqnDefomega}
 \omega(h) = \int_\R u(x)h(x)dx + \int_\R u'(x)h'(x)dx, \quad h\in H^1_\cc(\R),
\end{align}
so that $\omega = u - u''$ in a distributional sense, and a non-negative Borel measure $\dip$ on $\R$ in such a way that   
\begin{align}\label{eqnDefdip}
  \mu(B) = \dip(B) + \int_B u(x)^2 + u'(x)^2\, dx  
\end{align}
for every Borel set $B\subseteq\R$.
 Here, we denote with $H^1_\cc(\R)$ the subspace of functions in $H^1_\loc(\R)$ with compact support. 
 Let us point out that it is always possible to uniquely recover the pair $(u,\mu)$ from the distribution $\omega$ and the Borel measure $\dip$.
 With these definitions, we now consider the differential equation 
  \begin{align}\label{eqnSP}
  -f'' + \frac{1}{4} f = z\, \omega f + z^2 \dip f, 
 \end{align}
 where $z$ is a complex spectral parameter. 
 As the coefficients may be genuine distributions, this differential equation has to be interpreted in a suitable weak sense; see \cite{ConservCH, IndefiniteString, CHPencil, gewe14, ss03}.
 The corresponding spectral problem will serve as an isospectral problem for global conservative solutions of the Camassa--Holm equation as well as the two-component Camassa--Holm system; compare \cite{ConservCH, LagrangeCH, ConservMP}.
 Since we are interested in the multi-peakon case, we define the corresponding phase space next.

\begin{definition}
 The set $\Pe$ consists of all pairs $(u,\mu)$ in $\D$ such that the topological supports of the distribution $\omega$ and the Borel measure $\dip$ are discrete sets. 
\end{definition}

 We notice that for pairs $(u,\mu)$ in $\Pe$, the distribution $\omega$ can always be represented by an $\per$-periodic, real-valued Borel measure on $\R$, which we will denote with $\omega$ as well for notational simplicity. 

 Our paper will start with the discussion of the periodic spectral problem when the pair $(u,\mu)$ belongs to $\Pe$, including the introduction of necessary spectral quantities like the Floquet discriminant.
 We then continue with the Dirichlet spectral problem and, in particular, establish some important properties of an associated Weyl--Titchmarsh function.
 In addition, we derive trace formulas for all these spectra, which play a crucial role when solving initial value problems for the Camassa--Holm equation. 
 Subsequently, we then proceed to solve the inverse problem for the Dirichlet spectrum before solving the periodic inverse spectral problem.
 It turns out that isospectral sets can be identified with finite dimensional tori (a fact which is certainly not true without the additional term $\dip$ as long as $\omega$ is indefinite). 

With coefficients in $\Pe$,  direct and inverse spectral theory for~\eqref{eqnSP} shares a lot of similarities with the one for periodic Jacobi matrices \cite{mo76, tjac}.
In fact, this is not overly surprising as the differential equation~\eqref{eqnSP} reduces to a difference equation in this case.
However, there are a couple of subtle differences to point out. 
 First, one notes that it is possible to solve the differential equation~\eqref{eqnSP} explicitly when $z$ is equal to zero. 
 It follows readily  that zero is neither a periodic nor an antiperiodic eigenvalue, thus always creating a spectral gap around zero. 
 Unlike the remaining gaps, there is no Dirichlet eigenvalue present in this gap. 
 Secondly, there may or may not be Dirichlet eigenvalues above and below the spectral bands, which never happens for the auxiliary spectrum in the Jacobi case.

 \subsection*{Notation}  
  For integrals of a function $f$ which is locally integrable with respect to a Borel measure $\nu$ on an interval $I$, we will employ the convenient notation 
\begin{align}\label{eqnDefintmu}
 \int_x^y f\, d\nu = \begin{cases}
                                     \int_{[x,y)} f\, d\nu, & y>x, \\
                                     0,                                     & y=x, \\
                                     -\int_{[y,x)} f\, d\nu, & y< x, 
                                    \end{cases} \qquad x,\,y\in I, 
\end{align}
 rendering the integral left-continuous as a function of $y$. 
 If the function $f$ is locally absolutely continuous on $I$ and $g$ denotes a left-continuous distribution function of the Borel measure $\nu$, then we have the integration by parts formula 
\begin{align}\label{eqnPI}
  \int_{x}^y  f\, d\nu = \left. g f\right|_x^y - \int_{x}^y g(s) f'(s) ds, \quad x,\,y\in I,
\end{align}
 which will be used frequently throughout this article.

 \section{Floquet theory}\label{secDSP}

 Let us fix a pair $(u,\mu)$ in $\Pe$ throughout this section and first recall the definitions of the distribution $\omega$ in~\eqref{eqnDefomega} as well as the one of the Borel measure $\dip$ via~\eqref{eqnDefdip}. 
 As mentioned above, the distribution $\omega$ can be regarded as a Borel measure on $\R$ and upon choosing an arbitrary but fixed base point $a\in\R$, we may write 
 \begin{align}\label{eqnMEA}
 \omega & = \sum_{k\in\Z}\,\sum_{n=1}^N \omega_n \delta_{x_n + k\per}, &  \dip & = \sum_{k\in\Z}\, \sum_{n=1}^N \dip_n \delta_{x_n+k\per},
\end{align}
 for some non-negative integer $N$, strictly increasing points $x_1,\ldots,x_N\in [a,a+\per)$ and $\omega_n\in\R$, $\dip_n\geq 0$ for each $n\in\lbrace 1,\ldots,N\rbrace$.  
 In order to make this representation unique, we will always suppose that $|\omega_n|+\dip_n>0$ for all $n\in\lbrace 1,\ldots,N\rbrace$.  

 We are going to study spectral problems associated with the differential equation 
\begin{align}\label{eqnDE}
 -f'' + \frac{1}{4} f = z\, \omega f + z^2 \dip f, 
\end{align}
 where $z\in\C$ is a spectral parameter. 
 Regarding the precise meaning and basic properties of this differential equation with measure coefficients we refer to \cite[Section~2]{CHPencil}.
 Due to the special form of the Borel measures $\omega$ and $\dip$ in our case, this notion of solution can be described as follows: A function $f$ is a solution of the differential equation~\eqref{eqnDE} if it satisfies 
 \begin{align}\label{eqnDEdiscr01}
   -f'' + \frac{1}{4} f = 0 
 \end{align}
 away from the discrete support $\Sigma$ of $|\omega|+\dip$, together with the interface condition  
 \begin{align}
  \begin{pmatrix} f(x-) \\ f'(x-) \end{pmatrix}  =
\begin{pmatrix} 1 & 0  \\  z\, \omega(\lbrace x\rbrace) + z^2 \dip(\lbrace x\rbrace)  & 1 \end{pmatrix}
\begin{pmatrix} f(x+) \\ f'(x+) \end{pmatrix}
\label{eqnDEdiscr02}
 \end{align}
 for all $x\in\Sigma$. 
 Note that in this case, the solution $f$ is in general not differentiable at the points $x\in\Sigma$. 
 However, for simplicity of notation, we will always uniquely extend the derivative $f'$ to all of $\R$ by requiring it to be left-continuous. 
 With this notation, we readily obtain the useful identity 
 \begin{align}\label{eqnDEint}
   \int_x^y f'(s)h'(s)ds + \frac{1}{4} \int_x^y f(s)h(s)ds = \left.f'h\right|_{x}^{y} + z\int_x^y fh\,d\omega + z^2 \int_x^y fh\, d\dip
 \end{align}
 for all $x$, $y\in\R$ as long as the function $h$ belongs to $H^1_\loc(\R)$.

 It is not difficult to see that initial value problems for the differential equation~\eqref{eqnDE} always have a unique solution (see \cite[Lemma 2.1]{CHPencil} for example). 
 Thus we may introduce a particular fundamental system of solutions $c(z,\redot)$, $s(z,\redot)$ with the initial conditions
\begin{align}\label{eq:fss}
c(z,a) & = s'(z,a) = 1, &  c'(z,a) & = s(z,a)=0,
\end{align}
at the point $a$ for every $z\in\C$. 
Note that when $z$ is zero, we have explicitly 
\begin{align}\label{eqnCSatzero}
c(0,x) & = \cosh\left(\frac{x-a}{2}\right), & s(0,x) & = 2 \sinh\left(\frac{x-a}{2}\right), \qquad x\in\R.
\end{align}
 The functions $c(\ledot,x)$, $s(\ledot,x)$ as well as their spatial derivatives $c'(\ledot,x)$, $s'(\ledot,x)$ are real polynomials for every $x\in\R$. 
Next, we define {\em the monodromy matrix} $M$ via 
 \be\label{eq:cM}
 M(z)=\begin{pmatrix}
 c(z,a+\per) & s(z,a+\per)\\
 c'(z,a+\per) & s'(z,a+\per) 
 \end{pmatrix},\quad z\in\C, 
 \ee
 as well as {\em the Floquet discriminant} $\Delta$  by 
 \be\label{eq:Delta}
 \Delta(z)=\frac{\tr\,M(z)}{2} =\frac{c(z,a+\per)+s'(z,a+\per)}{2},\quad z\in\C.
 \ee 
 It readily follows from \eqref{eqnDEint} that the Wronski determinant of two solutions to \eqref{eqnDE} is constant on $\R$ and hence $\det M(z) =1$ for all $z\in\C$.  The qualitative behavior of the polynomial $\Delta$ is captured by the following result. 
 
 \begin{lemma}\label{lemDelta}
 All zeros of the Floquet discriminant $\Delta$ are real, non-zero and simple.
 Moreover, if $\dot{\Delta}(\lambda)=0$ for some $\lambda\in\C$, then $|\Delta(\lambda)|\ge 1$ and $\Delta(\lambda)\ddot{\Delta}(\lambda)<0$.
 \end{lemma}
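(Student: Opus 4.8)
\emph{The plan} is to turn the vanishing of the boundary term in \eqref{eqnDEint} into a quadratic relation whose coefficients inherit the sign condition $\dip\ge0$, and then to read off the remaining assertions from the polynomial structure of $\Delta$. Throughout I assume $N\ge1$, so that $\Delta$ is a non-constant polynomial.

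First I would settle reality and non-vanishing. From \eqref{eqnCSatzero} one computes $\Delta(0)=\cosh(\per/2)>1$, so $0$ is not a zero. Now let $\lambda\in\C$ be any zero of $\Delta$. Since $\det M(\lambda)=1$, the eigenvalues of $M(\lambda)$ solve $\rho^2-2\Delta(\lambda)\rho+1=\rho^2+1=0$, hence $\rho=\pm\I$, and a corresponding eigenvector yields a non-trivial solution $f$ with $f(a+\per)=\I\,f(a)$ and $f'(a+\per)=\I\,f'(a)$. Inserting $h=\overline{f}$ into \eqref{eqnDEint} over one period, the boundary term equals $(|\rho|^2-1)f'(a)\overline{f(a)}=0$ because $|\rho|=1$, so
\[
\int_a^{a+\per}|f'|^2+\tfrac14|f|^2\,dx=\lambda\int_a^{a+\per}|f|^2\,d\omega+\lambda^2\int_a^{a+\per}|f|^2\,d\dip .
\]
Abbreviating the three integrals by $R,P,Q$, we have $R>0$, $P\in\R$ and, decisively, $Q\ge0$ since $\dip\ge0$. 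Thus $\lambda$ is a root of the real quadratic $Q\lambda^2+P\lambda-R$, whose discriminant $P^2+4QR\ge0$; therefore $\lambda\in\R$.

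For simplicity I would next produce a perturbation formula for $\dot\Delta$ away from $\Delta^2=1$. Writing $M=\Phi_{>n}J_n^{-1}\Phi_{<n}$, where $\Phi_{<n},\Phi_{>n}$ are the transfer matrices over $[a,x_n)$ and $(x_n,a+\per]$ and $J_n$ is the jump factor from \eqref{eqnDEdiscr02}, one differentiates only the jumps. Using $\det=1$, the symplectic relation $\Phi^{-1}=-J\Phi^{\top}J$ with $J=\bigl(\begin{smallmatrix}0&1\\-1&0\end{smallmatrix}\bigr)$, and the left eigenvector $-w^{\top}J$ of $M(z)$ attached to the second Floquet solution $\phi$ (multiplier $\rho^{-1}$, with $w=(\phi(a),\phi'(a))^{\top}$), a short computation gives for the Floquet solution $\psi$ (multiplier $\rho$)
\[
\dot\rho=-\frac{\rho}{W(\phi,\psi)}\int_a^{a+\per}\phi\psi\,(d\omega+2z\,d\dip),\qquad 2\dot\Delta=\frac{\rho^2-1}{\rho^2}\,\dot\rho .
\]
On a band, i.e.\ for real $\lambda$ with $|\Delta(\lambda)|<1$, one has $\rho=\E^{\I\theta}$ and may take $\phi=\overline{\psi}$, so the integral becomes $P+2\lambda Q$. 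Were it zero, the relation $R=\lambda P+\lambda^2Q$ from the previous step would force $R=-\lambda^2Q\le0$, contradicting $R>0$; hence $\dot\Delta(\lambda)\neq0$ on every band. Since each zero of $\Delta$ is real and has $|\Delta|=0<1$, it lies on a band, so $\dot\Delta\neq0$ there and all zeros are simple.

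Finally, $\Delta$ is a real polynomial with only real and simple zeros, so Rolle's theorem together with $\deg\dot\Delta=\deg\Delta-1$ shows that all of its critical points are real and simple, exactly one strictly between each pair of consecutive zeros. The band computation forbids $|\Delta|<1$ at a critical point, giving $|\Delta(\lambda)|\ge1$ whenever $\dot\Delta(\lambda)=0$. On the open interval between two consecutive zeros $\Delta$ keeps a constant sign: if positive, the unique critical point there is a maximum with $\Delta\ge1$ and $\ddot\Delta<0$; if negative, a minimum with $\Delta\le-1$ and $\ddot\Delta>0$. In both cases $\Delta(\lambda)\ddot\Delta(\lambda)<0$, completing the proof. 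The one genuinely technical point is the derivation of the $\dot\rho$ formula — the bookkeeping of jump factors and Floquet eigenvectors — whereas the decisive idea at every step is simply the interplay of $R>0$ with $Q\ge0$, i.e.\ the hypothesis $\dip\ge0$.
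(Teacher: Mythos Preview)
Your argument is correct and runs along essentially the same lines as the paper's: reality of zeros from \eqref{eqnDEint} applied to a Floquet solution with unimodular multiplier, a formula expressing $\dot\Delta$ as (a nonzero factor times) $\int_a^{a+\per}\psi_-\psi_+\,d(\omega+2z\,\dip)$, positivity of this integral on bands because one may take $\psi_-=\overline{\psi_+}$ there, and the Rolle/degree count for the final claim about $\ddot\Delta$. The packaging differs in two places. First, the paper derives its $\dot\Delta$ formula via the shifted fundamental system $c_\per,s_\per$ and an integration-by-parts computation, whereas you obtain the equivalent identity by differentiating the jump factors in the transfer-matrix product and passing through $\dot\rho$; both routes land on the same integral. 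Second, at a critical point the paper splits into the cases $s(\lambda,a+\per)=0$ (handled by $|c+c^{-1}|\ge2$) and $s(\lambda,a+\per)\ne0$ (handled by the integral formula), while your Floquet-multiplier approach gives $\dot\Delta\ne0$ uniformly on the open bands and thereby avoids the split. One small expository remark: when you invoke ``the relation $R=\lambda P+\lambda^2Q$ from the previous step'' for the band Floquet solution $\psi$, you are tacitly using that the boundary-term computation $(|\rho|^2-1)\psi'(a)\overline{\psi(a)}=0$ holds for \emph{any} unimodular $\rho$, so the identity carries over with $P,Q,R$ now attached to $\psi$ rather than to the eigensolution at a zero of $\Delta$; making this explicit would tighten the exposition.
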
 
 
 \begin{proof}
   Let us suppose that $\lambda\in\C$ is a zero of the Floquet discriminant $\Delta$.
   Since the determinant of $M(\lambda)$ equals one, this implies that $\I$ is an eigenvalue of the matrix $M(\lambda)$. 
   Thus there is a nontrivial solution $f$ of the differential equation~\eqref{eqnDE} such that $f(a+\per) = \I f(a)$ and $f'(a+\per) = \I f'(a)$. 
   Setting $h=\lambda^\ast f^\ast$, $x=a$ and $y=a+\per$ in~\eqref{eqnDEint} and taking the imaginary part gives 
   \begin{align}\label{eqnfSA}
     (\lambda-\lambda^\ast) \left( \int_a^{a+\per} |f'(s)|^2ds + \frac{1}{4} \int_a^{a+\per} |f(s)|^2ds + \int_a^{a+\per} |\lambda f|^2 d\dip\right)=0.
   \end{align}
   Now we see that if $\lambda$ was non-real, then this would yield a contradiction. 
   
    In order to compute the derivative of the Floquet discriminant $\Delta$, we first introduce the solutions $c_\per(z,\redot)$, $s_\per(z,\redot)$ of the differential equation~\eqref{eqnDE} given by 
    \begin{align}\begin{split}\label{eqncsper}
      c_\per(z,x) & = s'(z,a+\per)c(z,x) - c'(z,a+\per)s(z,x), \\ s_\per(z,x) & = -s(z,a+\per)c(z,x) + c(z,a+\per)s(z,x), 
    \end{split}\end{align}
    for every $x\in\R$ and $z\in\C$, so that they satisfy the initial conditions  
   \begin{align*}
     c_\per(z,a+\per) & = s_\per'(z,a+\per) = 1, &  c_\per'(z,a+\per) & = s_\per(z,a+\per)=0,
   \end{align*}
   at the point $a+\per$.
   Upon choosing $f=c(z,\redot)$ and the function $h$ to be constant and equal to one, we differentiate~\eqref{eqnDEint} with respect to $z$ to obtain\footnote{The differentiation with respect to the spectral parameter is always done last.}
\begin{align*}
  \frac{1}{4} \int_x^y \dot{c}(z,s)ds & = \left. \dot{c}'(z,\redot)\right|_x^y + z\int_x^y \dot{c}(z,s)d\omega(s) + z^2 \int_x^y \dot{c}(z,s)d\dip(s) \\ 
   & \qquad\qquad\qquad\qquad\qquad\qquad\qquad\qquad + \int_x^y c(z,s)d\rho_z(s)
 \end{align*}
for every $x$, $y\in\R$, where $\rho_z$ is short for the Borel measure $\omega+2z\dip$. 
   By employing the integration by parts formula~\eqref{eqnPI}, this gives the identity
   \begin{align*}
    \dot{c}(z,a+\per) & = \left. \dot{c}(z,\redot) s_\per'(z,\redot) - \dot{c}'(z,\redot) s_\per(z,\redot) \right|_a^{a+\per} \\ 
                               & = \int_a^{a+\per} c(z,x)s_\per(z,x)d\rho_z(x), \quad z\in\C.
   \end{align*} 
   Furthermore, in much the same manner we also obtain
   \begin{align*}
    -\dot{s}'(z,a+\per) & =  \int_a^{a+\per} c_\per(z,x)s(z,x)d\rho_z(x), \quad z\in\C.
   \end{align*}   
   After plugging in~\eqref{eqncsper}, these two equations add up to
   \begin{align}\begin{split}\label{eqnDeltadot}
     \dot{\Delta}(z) & = \frac{c'(z,a+\per)}{2} \int_a^{a+\per} s(z,x)^2 d\rho_z(x)  - \frac{s(z,a+\per)}{2} \int_a^{a+\per} c(z,x)^2 d\rho_z(x) \\
                              & \qquad\quad + \frac{c(z,a+\per) - s'(z,a+\per)}{2} \int_a^{a+\per} c(z,x)s(z,x)d\rho_z(x), \quad z\in\C.
   \end{split}\end{align}
   Moreover, as long as $s(z,a+\per)$ is non-zero, we get 
   \begin{align}\label{eqnDeltadotpsi}
    \dot{\Delta}(z) & = -\frac{s(z,a+\per)}{2} \int_a^{a+\per} \psi_-(z,x) \psi_+(z,x) d\rho_z(x), 
   \end{align} 
   where $\psi_-(z,\redot)$ and $\psi_+(z,\redot)$ are the (nontrivial) solutions of~\eqref{eqnDE} given by
   \begin{align*}
    \psi_\pm(z,x) = c(z,x) + \frac{\Delta(z) \pm \sqrt{\Delta(z)^2-1} - c(z,a+\per)}{s(z,a+\per)} s(z,x), \quad x\in\R.
   \end{align*}    
   Now suppose that $\dot{\Delta}(\lambda)=0$ for some $\lambda\in\C$ (which then necessarily belongs to $\R$ since all zeros of $\Delta$ are real). 
   If $s(\lambda,a+\per)$ is zero, then 
   \begin{align*}
     c(\lambda,a+\per)s'(\lambda,a+\per) = \det M(\lambda) = 1
    \end{align*}
     and 
   \begin{align*}
    \left| c(\lambda,a+\per) + s'(\lambda,a+\per)\right| =  \left| c(\lambda,a+\per) + c(\lambda,a+\per)^{-1} \right| \geq 2.
   \end{align*}
   Otherwise, if $s(\lambda,a+\per)$ is non-zero and we suppose that $|\Delta(\lambda)|$ is less than one, then we have $\psi_-(\lambda,\redot)=\psi_+(\lambda,\redot)^\ast$ and the integral in~\eqref{eqnDeltadotpsi} turns into
   \begin{align*}
                \int_a^{a+\per} \left|\psi_+'(\lambda,x)\right|^2 dx + \frac{1}{4} \int_a^{a+\per} \left|\psi_+(\lambda,x)\right|^2 dx + \lambda^2 \int_a^{a+\per} \left|\psi_+(\lambda,x)\right|^2 d\dip(x) > 0
   \end{align*}   
   after an integration by parts.    
   Since this is a contradiction, we see that $\dot{\Delta}(\lambda)=0$ always implies $|\Delta(\lambda)|\geq1$. 
   The remaining claim is true for any real polynomial with only real and simple zeros. 
 \end{proof}

Let us now consider the spectral problem associated with our differential equation~\eqref{eqnDE} on the interval $[a,a+\per)$ with periodic/antiperiodic boundary conditions.
The corresponding periodic/antiperiodic spectrum $\sigma_{\pm}$ consist of all those $z\in\C$ for which there is a nontrivial solution $f$ of the differential equation~\eqref{eqnDE} with 
\begin{align}\label{eqnPBC}
 \begin{pmatrix} f(a) \\ f'(a) \end{pmatrix} = \pm \begin{pmatrix} f(a+\per) \\ f'(a+\per) \end{pmatrix}. 
\end{align}
Under the multiplicity of a periodic/antiperiodic eigenvalue we understand the number of linearly independent solutions of~\eqref{eqnDE} that satisfy~\eqref{eqnPBC}.

  \begin{proposition}\label{propperspec}
   The periodic/antiperiodic spectrum $\sigma_\pm$ is a finite set of nonzero reals and coincides with the set of zeros of the polynomial $\Delta\mp1$. 
   Each periodic/anti\-periodic eigenvalue's multiplicity is equal to its multiplicity as a zero of $\Delta\mp1$. 
  \end{proposition}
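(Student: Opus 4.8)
The plan is to recast the boundary conditions~\eqref{eqnPBC} as an eigenvalue problem for the monodromy matrix $M$. Every solution $f$ of~\eqref{eqnDE} is determined by its initial data $(f(a),f'(a))$ at $a$ through $(f(a+\per),f'(a+\per)) = M(z)(f(a),f'(a))$, so a nontrivial $f$ satisfies~\eqref{eqnPBC} if and only if $(f(a),f'(a))$ is an eigenvector of $M(z)$ for the eigenvalue $\pm1$. Since $\det M(z)=1$, a short computation gives $\det(M(z)\mp I) = 2(1\mp\Delta(z))$, whence $\pm1$ is an eigenvalue of $M(z)$ precisely when $\Delta(z)=\pm1$. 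This identifies $\sigma_\pm$ with the set of zeros of the polynomial $\Delta\mp1$. Moreover, \eqref{eqnCSatzero} yields $\Delta(0)=\cosh(\per/2)>1$, so $\Delta\mp1$ neither vanishes identically nor vanishes at zero; consequently $\sigma_\pm$ is a finite set of nonzero numbers.

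To see that these numbers are real, I would argue as in the proof of Lemma~\ref{lemDelta}. Fix $\lambda$ with $\Delta(\lambda)=\pm1$ and a nontrivial solution $f$ satisfying~\eqref{eqnPBC}, and insert $h=\lambda^\ast f^\ast$, $x=a$, $y=a+\per$ into~\eqref{eqnDEint}. Because $f(a+\per)=\pm f(a)$ and $f'(a+\per)=\pm f'(a)$ with the same sign, the boundary term $f'h|_a^{a+\per}$ cancels, and taking imaginary parts leaves
\[
 \im(\lambda)\left(\int_a^{a+\per}|f'(s)|^2\,ds + \frac14\int_a^{a+\per}|f(s)|^2\,ds + |\lambda|^2\int_a^{a+\per}|f|^2\,d\dip\right)=0.
\]
As the parenthesized quantity is strictly positive for the nontrivial solution $f$, we conclude $\im(\lambda)=0$.

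It remains to match multiplicities. The multiplicity of $\lambda\in\sigma_\pm$ in the sense of~\eqref{eqnPBC} is the dimension of $\ker(M(\lambda)\mp I)$, which lies in $\{1,2\}$ and equals $2$ exactly when $M(\lambda)=\pm I$. In the latter case $c'(\lambda,a+\per)=s(\lambda,a+\per)=0$ and $c(\lambda,a+\per)=s'(\lambda,a+\per)$, so all three summands in~\eqref{eqnDeltadot} vanish and $\dot\Delta(\lambda)=0$; combined with $\ddot\Delta(\lambda)\neq0$ from Lemma~\ref{lemDelta}, this makes $\lambda$ a zero of $\Delta\mp1$ of multiplicity two. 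If instead $M(\lambda)\neq\pm I$, so that the multiplicity is one, I would show that $\dot\Delta(\lambda)\neq0$, identifying $\lambda$ as a simple zero. Letting $\psi$ be a (real) nontrivial solution with~\eqref{eqnPBC} and feeding $f=h=\psi$, $z=\lambda$ into~\eqref{eqnDEint}, the boundary term again drops out since $(\pm1)^2=1$, and with $\rho_\lambda=\omega+2\lambda\dip$ one obtains
\[
 \lambda\int_a^{a+\per}\psi^2\,d\rho_\lambda = \int_a^{a+\per}\psi'(x)^2\,dx + \frac14\int_a^{a+\per}\psi(x)^2\,dx + \lambda^2\int_a^{a+\per}\psi(x)^2\,d\dip(x) > 0.
\]
Since $\lambda\neq0$, this forces $\int_a^{a+\per}\psi^2\,d\rho_\lambda\neq0$. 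Reading off $\dot\Delta(\lambda)$ from~\eqref{eqnDeltadotpsi} when $s(\lambda,a+\per)\neq0$, and from~\eqref{eqnDeltadot} when $s(\lambda,a+\per)=0$ (where then $c'(\lambda,a+\per)\neq0$), it becomes a nonzero multiple of this integral, hence nonzero.

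The step I expect to be the main obstacle is the last one. Because $\omega$ may be indefinite, the integral $\int_a^{a+\per}\psi^2\,d\rho_\lambda$ carries no sign a priori; it is exactly the nonnegativity of $\dip$ built into the pencil~\eqref{eqnDE} that turns the identity above into a strict positivity and thereby excludes a nontrivial Jordan block at a simple zero. The remaining parts are routine manipulations with the monodromy matrix and the formulas~\eqref{eqnDeltadot}--\eqref{eqnDeltadotpsi}.
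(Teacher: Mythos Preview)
Your proposal is correct and follows essentially the same approach as the paper. The only cosmetic difference is that for the multiplicity statement you argue the contrapositive (geometric multiplicity one $\Rightarrow$ $\dot\Delta(\lambda)\neq0$), whereas the paper shows directly that $\dot\Delta(\lambda)=0$ forces $M(\lambda)=\pm I$; both rely on the same positivity $\lambda\int_a^{a+\per}\psi^2\,d\rho_\lambda>0$ obtained from~\eqref{eqnDEint} and then feed it into~\eqref{eqnDeltadot}--\eqref{eqnDeltadotpsi}.
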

  
  \begin{proof}
   Let $\lambda\in\sigma_\pm$ be a periodic/antiperiodic eigenvalue with a corresponding eigenfunction $f$.
   It follows readily from~\eqref{eqnCSatzero} that $\lambda$ has to be non-zero. 
   Moreover, upon setting $h=\lambda^\ast f^\ast$, $x=a$ and $y=a+\per$ in~\eqref{eqnDEint} and taking the imaginary part gives~\eqref{eqnfSA}, which shows that $\lambda$ has to be real. 
   Furthermore, it is readily seen that some $z\in\C$ is a periodic/antiperiodic eigenvalue if and only if $\pm 1$ is an eigenvalue of the matrix $M(z)$, which is equivalent to $\Delta(z) = \pm1$. 
   
   Since the zeros of $\Delta\mp1$ have multiplicity at most two by Lemma~\ref{lemDelta}, it remains to show that a periodic/antiperiodic eigenvalue $\lambda\in\sigma_\pm$ is double if and only if $\dot{\Delta}(\lambda)$ vanishes. 
   If $\lambda$ has multiplicity two, then $\pm M(\lambda)$ is the identity matrix and~\eqref{eqnDeltadot} shows that $\dot{\Delta}(\lambda)$ is zero. 
   Conversely, if we suppose that $\dot{\Delta}(\lambda)$ vanishes, then certainly $s(\lambda,a+\per)$ is zero, since otherwise~\eqref{eqnDeltadotpsi} would give a contradiction. 
   It then follows readily that also $c(\lambda,a+\per)=s'(\lambda,a+\per)=\pm1$ and consequently~\eqref{eqnDeltadot} implies that $c'(\lambda,a+\per)=0$ and thus $\pm M(\lambda)$ is the identity matrix. 
  \end{proof}
     
   One sees that the periodic spectrum $\sigma_+$ and the antiperiodic spectrum $\sigma_-$ are not independent of each other. 
  In fact, we readily infer from Proposition~\ref{propperspec} that each of these spectra (including multiplicities) determines the other. 

\begin{corollary}\label{cor:per}
 The periodic/antiperiodic spectrum $\sigma_\pm$ (including multiplicities) determines the antiperiodic/periodic spectrum $\sigma_\mp$ (including multiplicities).
\end{corollary}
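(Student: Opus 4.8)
The plan is to reconstruct the entire polynomial $\Delta$ from one of the two spectra and then read off the other. By Proposition~\ref{propperspec}, the set $\sigma_+$ together with its multiplicities is precisely the complete list of zeros of the real polynomial $\Delta-1$, and likewise $\sigma_-$ is the zero set of $\Delta+1$ with matching multiplicities. Writing $m_\lambda$ for the multiplicity of $\lambda\in\sigma_+$, I would therefore set $p_+(z)=\prod_{\lambda\in\sigma_+}(z-\lambda)^{m_\lambda}$, which is entirely determined by the data $\sigma_+$ (with multiplicities), and observe that
\[
 \Delta(z)-1 = c_+\, p_+(z), \qquad z\in\C,
\]
for some real constant $c_+$. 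Knowing the zeros with multiplicities thus pins down $\Delta-1$ up to the single leading coefficient $c_+$.

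The one remaining piece of information is that normalization constant, and the crucial input is that $\Delta$ can be evaluated explicitly at the origin. From the formulas~\eqref{eqnCSatzero} for $c(0,\redot)$ and $s'(0,\redot)$ one reads off $\Delta(0)=\cosh(\per/2)$, a quantity depending only on the fixed period $\per$. Since every element of $\sigma_+$ is nonzero by Proposition~\ref{propperspec}, the value $p_+(0)=\prod_{\lambda\in\sigma_+}(-\lambda)^{m_\lambda}$ does not vanish, so evaluating the displayed identity at $z=0$ yields
\[
 c_+ = \frac{\cosh(\per/2)-1}{p_+(0)}.
\]
This recovers $c_+$, and hence the full polynomial $\Delta = 1 + c_+\,p_+$, solely from $\sigma_+$ and the known period $\per$.

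Once $\Delta$ is known, the antiperiodic spectrum $\sigma_-$ (with multiplicities) is obtained as the zero set of $\Delta+1$ together with the corresponding zero multiplicities, again by Proposition~\ref{propperspec}; this completes the direction $\sigma_+\rightsquigarrow\sigma_-$. The reverse direction is symmetric: one writes $\Delta+1=c_-\,q_-$ with $q_-(z)=\prod_{\lambda\in\sigma_-}(z-\lambda)^{m_\lambda}$ and fixes $c_-$ from $\Delta(0)+1=\cosh(\per/2)+1\neq 0$, using once more that $0\notin\sigma_-$ so that $q_-(0)\neq 0$.

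The only genuinely non-trivial point is this normalization step: the eigenvalues with their multiplicities determine $\Delta\mp 1$ merely up to a scalar, and that scalar has to be retrieved from independent information. The whole argument hinges on the fact (noted in the introduction) that $z=0$ always lies in a spectral gap with the explicitly computable value $\Delta(0)=\cosh(\per/2)$, combined with $0\notin\sigma_\pm$, which is exactly what guarantees $p_+(0)\neq 0$ (respectively $q_-(0)\neq 0$) so that the constant can be solved for. The degenerate case $\sigma_+=\emptyset$, where $\Delta$ is the constant $\cosh(\per/2)>1$ and $\sigma_-=\emptyset$ as well, is absorbed into the same formula with $p_+\equiv 1$.
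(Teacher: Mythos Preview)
Your argument is correct and follows essentially the same route as the paper: use Proposition~\ref{propperspec} to identify $\sigma_\pm$ (with multiplicities) with the zero divisor of $\Delta\mp1$, then fix the one missing scalar via the known value $\Delta(0)=\cosh(\per/2)$, and finally read off $\sigma_\mp$ from $\Delta\pm1$. You have simply made explicit the normalization step that the paper compresses into a single clause.
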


\begin{proof}
In view of Proposition~\ref{propperspec}, the periodic/antiperiodic spectrum $\sigma_\pm$ (including multiplicities) determines the polynomial $\Delta$ since its value at zero is $\cosh(\per/2)$.
 Conversely, the polynomial $\Delta$ also determines the antiperiodic/periodic spectrum $\sigma_\mp$ (including multiplicities). 
\end{proof}
 
 \begin{remark}
  It is not difficult to see that the periodic/antiperiodic spectrum is independent of the chosen base point $a$ and thus so is the Floquet discriminant.
\end{remark} 
 
 Let us now consider the zeros of the polynomial $\Delta^2 - 1$, each of which is non-zero, real and has multiplicity at most two in view of Proposition~\ref{propperspec}. 
 Since $\Delta^2-1$ is positive at zero and far out on the real axis, we may conclude that there is an even number of positive zeros as well as an even number of negative zeros (both counted with multiplicities). 
 Thus we may label them in non-decreasing order
 \begin{align}
  \lambda_{-2I_-},\lambda_{-2I_-+1},\ldots,\lambda_{-1},\lambda_1,\ldots,\lambda_{2I_+ -1},\lambda_{2I_+},
 \end{align}
 for some non-negative integers $I_-$, $I_+$ such that each zero has the same sign as its index. 
 It follows from Lemma~\ref{lemDelta} that this sequence indeed satisfies the inequalities  
\begin{align}\begin{split}
-\infty & <\lambda_{-2I_-} < \lambda_{-2I_-+1} \le \dots < \lambda_{-3} \le \lambda_{-2}  < \lambda_{-1} < 0 \\
 & \qquad\qquad\qquad 0 < \lambda_1 < \lambda_2 \le  \lambda_3 < \dots \le \lambda_{2I_+-1}<\lambda_{2I_+}<\infty ,
\end{split}\end{align}
where the lowest positive and negative zero is a simple periodic eigenvalue, followed by alternating pairs (except for the last one) of antiperiodic and periodic eigenvalues.  
 Upon introducing the index set $\inds = \lbrace -I_-,\ldots,-1\rbrace\cup\lbrace1,\ldots,I_+\rbrace$, we define the intervals
 \begin{align}
 \Gamma_i=\begin{cases} [-\infty,\lambda_{2i}], & i= -I_-, \\
                                           [\lambda_{2i-1},\lambda_{2i}], & i\in\lbrace -I_-+1,\ldots,-1\rbrace, \\
                                            [\lambda_{2i},\lambda_{2i+1}], & i\in\lbrace1,\ldots,I_+-1\rbrace, \\
                                             [\lambda_{2i},\infty], & i=I_+, \\  \end{cases} 
 \end{align}
 for each $i\in\inds$,  called {\em the gaps}. 
 A gap $\Gamma_i$ is called closed if it reduces to a single point and open otherwise. 
 If they exist, the last positive gap $\Gamma_{I_+}$ is called {\em the outermost positive gap} and the last negative gap $\Gamma_{-I_-}$ is called {\em the outermost negative gap}. 
 Obviously, the outermost gaps are always open. 
 The typical behavior of the Floquet discriminant $\Delta$ and the location of the gaps relative to it is depicted below:
  \begin{center}
 \begin{tikzpicture}[domain=-4:8, samples=101]

\draw[color=red, domain=-2.91:5.94] plot (\x,{ 1.4*(1-\x/1.6)*(1-\x/5.2)*(1+\x/2.2) }) node[right] {$\Delta$};

\draw[-] (-4,0) -- (8,0) node[above] {};
\draw[dashed, help lines] (-4,1) -- (8,1) node[right] {};
\draw[dashed, help lines] (-4,-1) -- (8,-1) node[right] {};
\draw[-] (0,-2) -- (0,2) node[right] {};

\draw[-] (-2.599,-0.1) -- (-2.599,0.1) node[above] {$\lambda_{-2}$};
\draw[-] (-1.599,0.1) -- (-1.599,-0.1) node[below] {$\lambda_{-1}$};
\draw[-] (0.582,0.1) -- (0.582,-0.1) node[below] {$\lambda_1$};
\draw[-] (2.659,-0.1) -- (2.659,0.1) node[above] {$\lambda_2$};
\draw[-] (4.54,-0.1) -- (4.54,0.1) node[above] {$\lambda_3$};
\draw[-] (5.616,0.1) -- (5.616,-0.1) node[below] {$\lambda_4$};

\draw[dotted, help lines] (-2.599,-1) -- (-2.599,-0.1) node[right] {};
\draw[dotted, help lines] (-1.599,0.1) -- (-1.599,1) node[right] {};
\draw[dotted, help lines] (0.582,0.1) -- (0.582,1) node[right] {};
\draw[dotted, help lines] (2.659,-1) -- (2.659,-0.1) node[right] {};
\draw[dotted, help lines] (4.54,-1) -- (4.54,-0.1) node[right] {};
\draw[dotted, help lines] (5.616,0.1) -- (5.616,1) node[right] {};

\draw[-, ultra thick] (-2.599,0) -- (-1.599,0) node[right] {};
\draw[-, ultra thick] (0.582,0) -- (2.659,0) node[right] {};
\draw[-, ultra thick] (4.54,0) -- (5.616,0) node[right] {};

\draw[-, color=white, ultra thick] (-3.8,0) -- (-3.1,0) node[above] {};
\draw[-, color=white, ultra thick] (3.32,0) -- (3.8,0) node[above] {};
\draw[-, color=white, ultra thick] (7.25,0) -- (7.8,0) node[above] {};
\node at (-3.42,-0.009) {$\Gamma_{-1}$}; 
\node at (3.6,0) {$\Gamma_{1}$};
\node at (7.56,0) {$\Gamma_{2}$};

\node[color=red] at (0,1.4) {\tiny $\bullet$};
\node at (0.7,1.6) {\tiny $\cosh(\per/2)$};

\end{tikzpicture}
\end{center}
 
In the solution of the periodic inverse spectral problem, a particular role will be played by the set $\To$ defined by\footnote{We use the convention $\Delta(-\infty)=\Delta(\infty)=1$ in this definition.}  
\begin{align}\label{eqnTodef}
 \To & = \prod_{i\in\inds} \To_i, & \To_i & = \left\lbrace (z,\zeta)\in \Gamma_i\times\R \left|\, \zeta^2 = \Delta(z)^2 - 1 \right. \right\rbrace, \quad i\in\inds.
\end{align}
Unless the gap $\Gamma_i$ is closed, the set $\To_i$ is homeomorphic to a torus (if $\Gamma_i$ is an outermost gap, then we view $\To_i$ as the one-point compactification of its finite part). 
The sets $\To_i$ correspond to connected components of the real part of a certain compactification of the underlying algebraic curve.  

 \section{Dirichlet spectrum}\label{sec:Dirichlet}

While continuing the notation from the previous section, we will next turn to the spectral problem associated with our differential equation~\eqref{eqnDE} on the interval $[a,a+\per)$ with Dirichlet boundary conditions at the endpoints.
The corresponding spectrum $\sigma$ consists of all those $z\in\C$ for which there is a nontrivial solution $f$ of the differential equation~\eqref{eqnDE} with $f(a)=f(a+\per)=0$.
From unique solvability of initial value problems for our differential equation, we see that such a solution is always unique up to scalar multiples. 
 
\begin{proposition}\label{prop:s}
 The Dirichlet spectrum $\sigma$ is a finite set of nonzero reals and coincides with the set of zeros of the polynomial $s(\ledot,a+\per)$, all of which are simple.  
\end{proposition}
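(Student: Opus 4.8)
The plan is to reduce everything to the single polynomial $s(\ledot,a+\per)$ and then to settle reality and simplicity by the same energy/weak-form technique already employed in Lemma~\ref{lemDelta} and Proposition~\ref{propperspec}.

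First I would identify the spectrum. Since $c(z,a)=1$ and $s(z,a)=0$, every solution $f$ of~\eqref{eqnDE} with $f(a)=0$ equals $f'(a)\,s(z,\redot)$, and it is nontrivial precisely when $f'(a)\neq0$. Hence $\lambda\in\sigma$ holds if and only if $s(\lambda,a+\per)=0$, with $s(\lambda,\redot)$ being the eigenfunction (unique up to scalar multiples). Because $s(\ledot,a+\per)$ is a real polynomial which, by~\eqref{eqnCSatzero}, takes the nonzero value $2\sinh(\per/2)$ at $z=0$, it does not vanish identically; this simultaneously shows that $\sigma$ is finite and that $0\notin\sigma$.

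Reality then follows exactly as in Proposition~\ref{propperspec}: for $\lambda\in\sigma$ with eigenfunction $f=s(\lambda,\redot)$, inserting $h=\lambda^\ast f^\ast$, $x=a$, $y=a+\per$ into~\eqref{eqnDEint} makes the boundary term $f'h|_a^{a+\per}$ vanish (as $f(a)=f(a+\per)=0$), and taking imaginary parts reproduces~\eqref{eqnfSA}, which forces $\lambda\in\R$ since $\int_a^{a+\per}|f'|^2 + \frac14\int_a^{a+\per}|f|^2>0$.

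The hard part will be simplicity, that is, showing $\dot{s}(\lambda,a+\per)\neq0$ whenever $s(\lambda,a+\per)=0$. Mimicking the derivation of~\eqref{eqnDeltadot} — choosing $f=s(z,\redot)$ and $h\equiv1$ in~\eqref{eqnDEint}, differentiating in $z$, and integrating by parts against the fundamental system $c_\per,s_\per$ at $a+\per$ — I expect the formula
\[
\dot{s}(z,a+\per) = \int_a^{a+\per} s(z,x)\,s_\per(z,x)\, d\rho_z(x), \qquad \rho_z = \omega + 2z\dip,
\]
in perfect analogy with the expression obtained there for $\dot{c}(z,a+\per)$. At a Dirichlet eigenvalue $\lambda$ one has $s_\per(\lambda,\redot)=c(\lambda,a+\per)\,s(\lambda,\redot)$ by~\eqref{eqncsper}, so this reduces to $\dot{s}(\lambda,a+\per)=c(\lambda,a+\per)\int_a^{a+\per} s(\lambda,x)^2\, d\rho_\lambda(x)$, where $c(\lambda,a+\per)\neq0$ because $s(\lambda,a+\per)=0$ together with $\det M(\lambda)=1$ forces $c(\lambda,a+\per)s'(\lambda,a+\per)=1$. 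It then remains to rule out that the integral vanishes. For this I would feed the real eigenfunction back into~\eqref{eqnDEint} with $f=h=s(\lambda,\redot)$ to obtain the energy identity
\[
\int_a^{a+\per}|s'|^2 + \tfrac14\int_a^{a+\per}|s|^2 = \lambda\!\int_a^{a+\per}\! s^2\,d\omega + \lambda^2\!\int_a^{a+\per}\! s^2\,d\dip,
\]
whose left-hand side is strictly positive. Writing $A=\int_a^{a+\per} s^2\,d\omega$ and $B=\int_a^{a+\per} s^2\,d\dip\ge0$, the integral in question is $A+2\lambda B$ while the identity reads $\lambda A+\lambda^2 B>0$; if $A+2\lambda B=0$, then substituting $A=-2\lambda B$ gives $\lambda A+\lambda^2 B=-\lambda^2 B\le0$, a contradiction. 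Hence $\dot{s}(\lambda,a+\per)\neq0$ and every zero is simple. The only genuinely technical point is justifying the measure-coefficient differentiation and integration by parts underlying the displayed formula for $\dot{s}(z,a+\per)$, but this is entirely parallel to the corresponding step in Lemma~\ref{lemDelta}, so I would simply reuse that machinery.
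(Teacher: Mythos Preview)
Your proposal is correct and follows essentially the same approach as the paper: both identify $\sigma$ with the zero set of $s(\ledot,a+\per)$, exclude $z=0$ via~\eqref{eqnCSatzero}, obtain reality from~\eqref{eqnfSA}, and derive the formula $\dot{s}(z,a+\per)=\int_a^{a+\per} s(z,x)s_\per(z,x)\,d\rho_z(x)$ to handle simplicity. The paper merely says this formula ``shows that all zeros of $s(\ledot,a+\per)$ are simple upon using~\eqref{eqnDEint}'', whereas you spell out the missing steps (reducing $s_\per$ to $c(\lambda,a+\per)s$, invoking $\det M(\lambda)=1$, and the $A+2\lambda B$ versus $\lambda A+\lambda^2 B$ contradiction), which is exactly what the terse sentence is pointing to.
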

 
\begin{proof}
 Let $\kappa\in\sigma$ be a Dirichlet eigenvalue with corresponding eigenfunction $f$. 
 It follows readily from~\eqref{eqnCSatzero} that $\kappa$ has to be non-zero. 
 Moreover, upon setting $h=\kappa^\ast f^\ast$, $x=a$ and $y=a+\per$ in~\eqref{eqnDEint} and taking the imaginary part gives~\eqref{eqnfSA}, which shows that $\kappa$ has to be real. 
 Furthermore, it is readily seen that some $z\in\C$ is a Dirichlet eigenvalue if and only if $s(z,a+\per)$ vanishes. 

In much the same way as in the proof of Lemma~\ref{lemDelta}, we obtain
\begin{align}\label{eqnsdot}
 \dot{s}(z,a+\per) = \int_a^{a+\per} s(z,x)s_\per(z,x) d\rho_z(x), \quad z\in\C, 
\end{align}
which shows that all zeros of $s(\ledot,a+\per)$ are simple upon using~\eqref{eqnDEint}. 
\end{proof}

The location of the Dirichlet spectrum in relation to the periodic and antiperiodic spectrum is now described by the following result. 

\begin{lemma}
 Each Dirichlet eigenvalue belongs to one of the gaps.  
 Conversely, except for the outermost gaps, each gap contains exactly one Dirichlet eigenvalue and each outermost gap contains at most one Dirichlet eigenvalue. 
\end{lemma}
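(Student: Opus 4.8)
The plan is to pin down each Dirichlet eigenvalue through two signs that I can compute explicitly: the value $\Delta$ takes there and the sign of $\dot s(\,\cdot\,,a+\per)$, both accessible via the energy identity~\eqref{eqnDEint} together with the standing hypothesis $\dip\geq 0$. First I would record the consequences of $\det M=1$. Writing the entries at $a+\per$, the relation $cs'-c's=1$ rearranges to $\Delta^2-1=\big(\tfrac{c-s'}{2}\big)^2+c'\,s$. Hence at any Dirichlet eigenvalue $\kappa$ (a zero of $s(\,\cdot\,,a+\per)$, which is real, simple and nonzero by Proposition~\ref{prop:s}) we get $\Delta(\kappa)^2-1=\big(\tfrac{c-s'}{2}\big)^2\geq 0$, so $|\Delta(\kappa)|\geq 1$, while $c(\kappa,a+\per)\,s'(\kappa,a+\per)=1$ forces $\sgn c(\kappa,a+\per)=\sgn\Delta(\kappa)$. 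In particular no Dirichlet eigenvalue lies in an open band, so every Dirichlet eigenvalue lies in the set $\{|\Delta|\geq 1\}$, that is, in one of the gaps or in the central gap around zero.

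The crucial quantity is $\dot s(\kappa,a+\per)$. Since $s(\kappa,a+\per)=0$, the solution $s_\per(\kappa,\,\cdot\,)$ reduces to $c(\kappa,a+\per)\,s(\kappa,\,\cdot\,)$ by~\eqref{eqncsper}, so~\eqref{eqnsdot} becomes $\dot s(\kappa,a+\per)=c(\kappa,a+\per)\int_a^{a+\per}s(\kappa,x)^2\,d\rho_\kappa(x)$ with $\rho_\kappa=\omega+2\kappa\dip$. To sign the integral I would put $f=h=s(\kappa,\,\cdot\,)$ into~\eqref{eqnDEint}; the boundary term vanishes because $s(\kappa,a)=s(\kappa,a+\per)=0$, leaving $\kappa\int s^2 d\omega+\kappa^2\int s^2 d\dip=\int|s'|^2+\tfrac14\int|s|^2>0$. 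Setting $A=\int s^2 d\omega$ and $B=\int s^2 d\dip\geq 0$, this reads $\kappa(A+\kappa B)>0$, whence an elementary case split (using $B\geq 0$) gives $\sgn(A+2\kappa B)=\sgn\kappa$, i.e. $\sgn\int s^2 d\rho_\kappa=\sgn\kappa$. Combining, $\sgn\dot s(\kappa,a+\per)=\sgn\Delta(\kappa)\,\sgn\kappa$. Because consecutive simple zeros of a real polynomial have opposite derivative signs, while $\sgn\Delta$ and $\sgn\kappa$ are each constant on any gap not containing zero, this already forces \emph{at most one} Dirichlet eigenvalue in every internal and outermost gap. For the central gap I would additionally invoke $s(0,a+\per)=2\sinh(\per/2)>0$ from~\eqref{eqnCSatzero}: a single eigenvalue in the positive part of the central gap would satisfy $\dot s>0$, i.e. $s$ would pass from negative to positive, contradicting $s(0,a+\per)>0$ and the absence of zeros between it and $0$ (and symmetrically on the negative part), while two on the same side would violate the alternation of derivative signs; thus the central gap contains no Dirichlet eigenvalue.

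It remains to produce \emph{at least one} eigenvalue in each internal gap, for which I would run the same energy argument at the band edges. At a simple periodic/antiperiodic eigenvalue $\lambda$ with $s(\lambda,a+\per)\neq 0$, formula~\eqref{eqnDeltadotpsi} with the coinciding real Floquet eigenfunction $\psi=\psi_\pm(\lambda,\,\cdot\,)$ gives $\dot\Delta(\lambda)=-\tfrac12 s(\lambda,a+\per)\int\psi^2 d\rho_\lambda$, and applying~\eqref{eqnDEint} to the (anti)periodic $\psi$ yields $\sgn\int\psi^2 d\rho_\lambda=\sgn\lambda$ by the very same trick; hence $\sgn s(\lambda,a+\per)=-\sgn\dot\Delta(\lambda)\,\sgn\lambda$. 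Reading $\sgn\dot\Delta$ off the shape of $\Delta$ from Lemma~\ref{lemDelta} (it has opposite signs at the two endpoints of an open gap and equal signs at the two edges of a band), I find that $s(\,\cdot\,,a+\per)$ changes sign across every open internal gap and preserves its sign across every band, so the intermediate value theorem supplies a Dirichlet eigenvalue in each internal gap, unique by the previous paragraph. A closed internal gap is a double periodic/antiperiodic eigenvalue where $\pm M$ is the identity, so $s(\,\cdot\,,a+\per)$ vanishes there and it again carries exactly one Dirichlet eigenvalue.

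The main obstacle I anticipate is bookkeeping at the band edges rather than any new idea. I must allow a Dirichlet eigenvalue to sit exactly at a simple open-gap endpoint, where $s(\lambda,a+\per)=0$ and the sign formula degenerates; it should then be recorded as the unique eigenvalue of that closed gap, which is legitimate because open gaps have simple endpoints and adjacent gaps are separated by nondegenerate bands, so no eigenvalue is claimed by two gaps. Finally, keeping the two outermost gaps out of the interlacing—asserting only ``at most one'' there, since there is no band beyond them to force a sign change—is exactly what the variable degree of $s(\,\cdot\,,a+\per)$ allows, and is where the hedged wording of the statement comes from.
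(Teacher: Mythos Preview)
Your argument is correct and takes a genuinely different route from the paper. The paper introduces the auxiliary functions
\[
  M_\pm(z) = -\frac{2\Delta(z)\mp 2}{z\,s(z,a+\per)},
\]
verifies via the energy identity~\eqref{eqnDEint} (applied to the solution $\theta_\pm$ built out of $c$ and $s$) that each $M_\pm$ is a rational Herglotz--Nevanlinna function, and then simply invokes the standard interlacing of poles and zeros for such functions: the poles sit at $0$ and at the Dirichlet eigenvalues, the zeros at the periodic/antiperiodic eigenvalues, and interlacing forces exactly one pole between consecutive zeros. You instead track explicitly the sign of $s(\,\cdot\,,a+\per)$ and of its derivative at Dirichlet eigenvalues and at band edges, using the determinant relation, the formulas~\eqref{eqnsdot} and~\eqref{eqnDeltadotpsi}, and the positivity estimate $\sgn\int f^2\,d\rho_\lambda=\sgn\lambda$ drawn from~\eqref{eqnDEint} together with $\dip\geq 0$. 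Your approach is more elementary and fully self-contained (no appeal to Herglotz--Nevanlinna interlacing), at the price of some additional casework---the separate treatment of the central gap, of closed gaps, and of Dirichlet eigenvalues landing on gap endpoints. The paper's approach is shorter and more conceptual but presupposes familiarity with the pole--zero structure of Herglotz--Nevanlinna functions; in effect, your sign computations reprove exactly that interlacing in this concrete setting.
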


\begin{proof}
 For every $z\in\C\backslash\R$, we consider the solution $\theta_\pm(z,\redot)$ of~\eqref{eqnDE} given by 
 \begin{align*}
  \theta_\pm(z,x) = \frac{s(z,a+\per)c(z,x) - (c(z,a+\per)\mp1)s(z,x)}{zs(z,a+\per)}, \quad x\in\R,
 \end{align*}
 and define the analytic function $M_\pm$ by  
 \begin{align*}
   M_\pm(z) = \theta_\pm'(z,a)z^\ast\theta_\pm(z,a)^\ast - \theta_\pm'(z,a+\per)z^\ast\theta_\pm(z,a+\per)^\ast, \quad z\in\C\backslash\R.
 \end{align*}
 Setting $f=\theta_\pm(z,\redot)$, $h=z^\ast\theta_\pm(z^\ast,\redot)$, $x=a$ and $y=a+\per$ in~\eqref{eqnDEint}, we obtain 
 \begin{align*}
  \frac{M_\pm(z)-M_\pm(z)^\ast}{z-z^\ast}  & = \int_a^{a+\per} \left|\theta_\pm'(z,x)\right|^2 dx +  \frac{1}{4} \int_a^{a+\per} \left|\theta_\pm(z,x)\right|^2 dx \\
                        & \qquad\qquad\qquad\qquad\quad  + \int_a^{a+\per} \left|z\theta_\pm(z,x)\right|^2 d\dip(x), \quad z\in\C\backslash\R.
 \end{align*}
 Thus the function $M_\pm$ is a Herglotz--Nevanlinna function and upon noting that 
 \begin{align*}
   M_\pm(z) = -\frac{2\Delta(z)\mp2}{zs(z,a+\per)}, \quad z\in\C\backslash\R,
 \end{align*}
 the claims follow from the corresponding interlacing property of zeros and poles.
\end{proof}

Let us now define a strictly increasing sequence $\kappa_i\in\Gamma_i$, indexed by $i\in\inds$, 
\begin{align}
 \kappa_{-I_-}, \kappa_{-I_-+1},\ldots,\kappa_{-1},\kappa_1,\ldots, \kappa_{I_+-1},\kappa_{I_+}
\end{align}
in the following way: 
For every $i\in\inds$ such that there is a Dirichlet eigenvalue in the gap $\Gamma_i$, we define $\kappa_i$ to be this (unique) eigenvalue.
If there is no Dirichlet eigenvalue in the gap $\Gamma_i$, then we define $\kappa_i$ to be $-\infty$ if $\Gamma_i$ is the outermost negative gap and $\kappa_i$ to be $\infty$ if $\Gamma_i$ is the outermost positive gap.
Clearly, we have 
\begin{align}
 -\infty \leq \kappa_{-I_-} < \dots < \kappa_{-1}< 0 < \kappa_1 < \dots < \kappa_{I_+} \leq \infty
\end{align}
and the Dirichlet spectrum $\sigma$ consists precisely of all those $\kappa_i$ that are finite.

\begin{remark}\label{rem:Dirspec}
 Let us point out that in contrast to the periodic and the antiperiodic spectrum, the Dirichlet spectrum in general does depend on the choice of the base point $a$ (as Proposition~\ref{propTID} will show).  
\end{remark}

Associated with the Dirichlet spectral problem is also a sequence of so-called {\em norming constants} $\nrc_\kappa$. 
For every Dirichlet eigenvalue $\kappa\in\sigma$, they are defined by 
\begin{align}\label{eqnnrc}
 \frac{1}{\nrc_\kappa} = \int_a^{a+\per} s'(\kappa,x)^2 dx + \frac{1}{4} \int_a^{a+\per} s(\kappa,x)^2 dx + \kappa^2 \int_a^{a+\per} s(\kappa,x)^2 d\dip(x)>0.
\end{align}
Upon employing~\eqref{eqnDEint} one more time, we see that 
\begin{align}\label{eqnnrc2}
 \frac{1}{\kappa\nrc_\kappa} = \int_a^{a+\per} s(\kappa,x)^2 d\omega(x) + 2\kappa \int_a^{a+\per} s(\kappa,x)^2 d\dip(x).
\end{align}
In addition, we furthermore introduce {\em the Dirichlet divisors} $\hat{\kappa}\in\To$ by  
\begin{align}\label{eqnhatkappa}
 \hat{\kappa}_i = \left(\kappa_i,  \Delta(\kappa_i) - s'(\kappa_i,a+\per) \right)\in\To_i, \quad i\in\inds.
\end{align}
Here the second component of $\hat{\kappa}_i$ is supposed to be interpreted as zero when $\Gamma_i$ is an outermost gap that does not contain a Dirichlet eigenvalue.

Another useful object in connection with the Dirichlet spectral problem on the interval $[a,a+\per)$ is {\em the Weyl--Titchmarsh function} $m$ given by 
\begin{align}\label{eq:m_dir}
 m(z)=-\frac{c(z,a+\per)}{zs(z,a+\per)},\quad z\in\C\backslash\R.
\end{align}
The decisive property of this function for solving the inverse spectral problem in the following section lies in the fact that it has a finite continued fraction expansion in terms of the coefficients $\omega$ and $\dip$.
 To this end, we introduce the quantities
 \begin{align}\label{eqnan}
  l_n & = 2 \tanh\left(\frac{x_{n}-a}{2}\right) - 2 \tanh\left(\frac{x_{n-1}-a}{2}\right), \quad n= 1,\ldots,N+1, 
  \end{align}
  where we set $x_0=a$ and $x_{N+1}=a+\per$ for convenience, as well as the polynomials 
 \begin{align}\label{eqnbn}
  q_n(z) & = \left(\omega_n + z\dip_n\right) \cosh^2\left(\frac{x_n-a}{2}\right), \quad z\in\C,~ n=1,\ldots,N.
 \end{align}

 \begin{lemma}\label{lemCF}
  The Weyl--Titchmarsh function $m$ admits the finite continued fraction expansion   
  \begin{align}\label{eqnContFrac}
    m(z) =  \cfrac{1}{-l_{1}z + \cfrac{1}{q_1(z) + \cfrac{1}{\;\ddots\; + \cfrac{1}{-l_Nz + \cfrac{1}{q_N(z) + \cfrac{1}{-l_{N+1}z}}}}}} \qquad z\in\C\backslash\R.
  \end{align}
 \end{lemma}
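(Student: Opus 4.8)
The plan is to reduce the differential equation~\eqref{eqnDE} to a \emph{discrete string} by a Liouville transformation and to recognize~\eqref{eqnContFrac} as the associated Stieltjes continued fraction. Since $y(x):=c(0,x)=\cosh(\tfrac{x-a}{2})$ is a nowhere vanishing solution of the free equation~\eqref{eqnDEdiscr01}, I would substitute $f=y\,g$ and pass to the new independent variable $\xi=2\tanh(\tfrac{x-a}{2})$, which satisfies $d\xi=dx/y^{2}$ and maps $[a,a+\per]$ onto $[\xi_{0},\xi_{N+1}]$ with $\xi_{n}:=2\tanh(\tfrac{x_{n}-a}{2})$, so that $\xi_{n}-\xi_{n-1}=l_{n}$ (recall $x_{0}=a$ and $x_{N+1}=a+\per$). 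Writing $\dot{g}$ for the derivative of $g$ with respect to $\xi$, a short computation gives $\dot{g}=y f'-y' f$, the Wronskian of $y$ and $f$; this is constant on every interval free of support points, so that $g$ is \emph{piecewise affine} in $\xi$. Using continuity of $f$ together with the interface condition~\eqref{eqnDEdiscr02}, the transformed solution jumps according to
\[
 \dot{g}(\xi_{n}+)-\dot{g}(\xi_{n}-)=-(z\omega_{n}+z^{2}\dip_{n})\,y(x_{n})^{2}\,g(\xi_{n})=-z\,q_{n}(z)\,g(\xi_{n}),
\]
because $y(x_{n})^{2}=\cosh^{2}(\tfrac{x_{n}-a}{2})$. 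In other words, $g$ describes a discrete string on $[\xi_{0},\xi_{N+1}]$ carrying point masses $z\,q_{n}(z)$ at the nodes $\xi_{n}$, separated by free pieces of lengths $l_{n}$.

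Next I would rewrite the Weyl--Titchmarsh function in these coordinates. Let $\psi(z,\redot)$ be the solution of~\eqref{eqnDE} with $\psi(z,a+\per)=0$, so that $\psi(z,a)=s(z,a+\per)$ and $\psi'(z,a)=-c(z,a+\per)$; by the definition~\eqref{eq:m_dir} this means
\[
 m(z)=\frac{\psi'(z,a)}{z\,\psi(z,a)}.
\]
Since $y(a)=1$ and $y'(a)=0$, the transformation yields $g(\xi_{0})=\psi(z,a)$ and $\dot{g}(\xi_{0})=\psi'(z,a)$, whence $m(z)=\dot{g}(\xi_{0})/(z\,g(\xi_{0}))$, while the boundary condition turns into $g(\xi_{N+1})=0$.

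I would then sweep the piecewise affine profile $g$ from the right endpoint towards $\xi_{0}$. Abbreviating $g_{n}=g(\xi_{n})$ and introducing the one-sided slope quotients $W_{n}=\dot{g}(\xi_{n}+)/g_{n}$ and $V_{n}=\dot{g}(\xi_{n}-)/g_{n}$, the three features of the string (the Dirichlet condition on the last free piece, the jump at a mass, and the affine behaviour on a free piece) give
\[
 W_{N}=-\frac{1}{l_{N+1}},\qquad V_{n}=W_{n}+z\,q_{n}(z),\qquad \frac{1}{W_{n-1}}=\frac{1}{V_{n}}-l_{n}.
\]
Defining the continued-fraction tails by $m_{2k}=W_{k}/z$ and $m_{2k-1}=z/V_{k}$, these relations translate verbatim into
\[
 m_{2k}=\frac{1}{-l_{k+1}z+m_{2k+1}},\qquad m_{2k-1}=\frac{1}{q_{k}(z)+m_{2k}},
\]
together with $m_{2N+1}=0$ and $m_{0}=\dot{g}(\xi_{0})/(z\,g(\xi_{0}))=m(z)$. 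Unfolding this chain of substitutions produces exactly~\eqref{eqnContFrac}.

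The genuine difficulty is only bookkeeping: at the finitely many values of $z$ where some node value $g_{n}$ (equivalently some $V_{n}$) vanishes, the Riccati quotients degenerate. The cleanest remedy is to run the sweep matricially, letting free pieces and masses act on $(g,\dot{g})^{\mathsf{T}}$ by the elementary factors $\left(\begin{smallmatrix} 1 & l_{n} \\ 0 & 1\end{smallmatrix}\right)$ and $\left(\begin{smallmatrix} 1 & 0 \\ -z\,q_{n}(z) & 1\end{smallmatrix}\right)$, so that~\eqref{eqnContFrac} becomes the classical identification of a product of such triangular factors with the convergents of a continued fraction; alternatively, since both sides of~\eqref{eqnContFrac} are rational in $z$, it suffices to verify the recursion for all but finitely many $z$ and to conclude by analytic continuation. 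Finally, one should confirm that the Liouville transformation is legitimate in the weak (measure-coefficient) formulation of~\eqref{eqnDE}, but this is immediate from the explicit interface condition~\eqref{eqnDEdiscr02} used above.
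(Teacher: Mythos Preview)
Your argument is correct and is essentially the paper's proof in more transparent clothing: the paper's auxiliary function
\[
\Lambda(z,x)=\frac{s_{\per}'(z,x)}{s_{\per}(z,x)}\cosh^{2}\!\Bigl(\tfrac{x-a}{2}\Bigr)-\tfrac{1}{2}\sinh\!\Bigl(\tfrac{x-a}{2}\Bigr)\cosh\!\Bigl(\tfrac{x-a}{2}\Bigr)
\]
is precisely your quotient $\dot g/g$ after the Liouville substitution $f=yg$, $\xi=2\tanh(\tfrac{x-a}{2})$, and the jump and free-piece identities you record for $V_{n}$, $W_{n}$ are literally the relations the paper verifies for $\Lambda(z,x_{n})$ and $\Lambda(z,x_{n}+)$. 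The only difference is that you name the underlying change of variables and the discrete-string interpretation explicitly, whereas the paper writes down $\Lambda$ directly and checks its Riccati equation and interface jumps by hand.
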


 \begin{proof}
 For each fixed $z\in\C\backslash\R$, consider the function $\Lambda(z,\cdot\,)$ on $[a,a+\per)$ given by 
  \begin{align*}
     \Lambda(z,x) = \frac{s_\per'(z,x)}{s_\per(z,x)} \cosh^2\left(\frac{x-a}{2}\right) - \frac{1}{2} \sinh\left(\frac{x-a}{2}\right) \cosh\left(\frac{x-a}{2}\right)
  \end{align*}
  for every $x\in[a,a+\per)$.
  Because of the interface condition \eqref{eqnDEdiscr02}, we first note that
  \begin{align*}
    \Lambda\left(z,x_n\right)-\Lambda\left(z,x_n+\right)=(z\,\omega_n+z^2\dip_n)\cosh^2\left(\frac{x_n-a}{2}\right), \quad n=1,\ldots,N.
  \end{align*}
  Upon differentiating the function $\Lambda(z,\redot)$, one furthermore obtains 
  \begin{align*}
   \Lambda'(z,x) = -\Lambda(z,x)^2 \cosh^{-2}\left(\frac{x-a}{2}\right),  
  \end{align*}
  for all $x\in(a,a+\per)$ away from $\lbrace x_1,\ldots,x_N\rbrace$, which immediately implies that 
  \begin{align*}
   \frac{1}{\Lambda(z,x_{n})} - \frac{1}{\Lambda(z,x_{n-1}+)} & =  l_{n}, & - \frac{1}{\Lambda(z,x_N+)} & = l_{N+1},
  \end{align*}
  for all $n\in\lbrace 2,\ldots,N\rbrace$, but also for $n=1$ if $x_1>a$. 
  Thus, we finally arrive at  
  \begin{align*}
    \frac{1}{\Lambda\left(z,x_{n-1}+\right)} = - l_{n} + \frac{1}{zq_{n}(z) + \Lambda\left(z,x_{n}+\right)}, \quad n=2,\ldots,N, 
  \end{align*} 
  and it remains to note that  
  \begin{align*}
   \frac{1}{zm(z)} = \frac{s_\per(z,a)}{s_\per'(z,a)} = \frac{1}{\Lambda\left(z,a\right)} = - l_{1} + \frac{1}{zq_{1}(z) + \Lambda\left(z,x_{1}+\right)}, 
   \end{align*}
  in order to  conclude the proof.  
 \end{proof}  
  
   On the other side, the Weyl--Titchmarsh function $m$ also has a particular partial fraction expansion in terms of the Dirichlet spectral data.
  
\begin{lemma}\label{lemmherglotz}
 The Weyl--Titchmarsh function $m$ is a rational Herglotz--Ne\-van\-linna function and admits the partial fraction expansion
 \begin{align}\label{eqnmIRep}
  m(z) = z\,\dip(\lbrace a\rbrace) + \omega(\lbrace a\rbrace) - \frac{1}{2z} \coth(\per/2) + \sum_{\kappa\in\sigma} \frac{\nrc_\kappa}{\kappa-z} , \quad z\in\C\backslash\R.  
 \end{align} 
\end{lemma}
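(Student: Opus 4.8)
The plan is to treat $m$ as an explicit rational function, to locate its poles and residues, to determine its polynomial part, and then to observe that the resulting expression is manifestly of Herglotz--Nevanlinna type. Since $c(\ledot,a+\per)$ and $s(\ledot,a+\per)$ are real polynomials and $s(\ledot,a+\per)$ has only simple zeros by Proposition~\ref{prop:s}, the function $m$ defined in~\eqref{eq:m_dir} is rational, with all of its poles contained in the zero set $\lbrace 0\rbrace\cup\sigma$ of $z\,s(z,a+\per)$.

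First I would check that no cancellation occurs and that all of these poles are simple. At a Dirichlet eigenvalue $\kappa\in\sigma$ the relation $c(\kappa,a+\per)s'(\kappa,a+\per)=\det M(\kappa)=1$ forces $c(\kappa,a+\per)\neq 0$, whereas at the origin~\eqref{eqnCSatzero} gives $c(0,a+\per)=\cosh(\per/2)\neq 0$ and $s(0,a+\per)=2\sinh(\per/2)\neq 0$, so that $0$ is a simple pole stemming solely from the explicit factor $z$. The residue at the origin is then read off from~\eqref{eqnCSatzero} as $-\tfrac{1}{2}\coth(\per/2)$, producing the term $-\tfrac{1}{2z}\coth(\per/2)$. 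For the residue at $\kappa$ I would first use~\eqref{eqncsper} together with $s(\kappa,a+\per)=0$ to note that $s_\per(\kappa,\ledot)=c(\kappa,a+\per)s(\kappa,\ledot)$, whence~\eqref{eqnsdot} yields $\dot{s}(\kappa,a+\per)=c(\kappa,a+\per)\int_a^{a+\per}s(\kappa,x)^2\,d\rho_\kappa(x)$ with $\rho_\kappa=\omega+2\kappa\dip$. Rewriting the integral via~\eqref{eqnnrc2} as $(\kappa\nrc_\kappa)^{-1}$, the residue $-c(\kappa,a+\per)/(\kappa\dot{s}(\kappa,a+\per))$ collapses to $-\nrc_\kappa$, which is exactly the contribution $\nrc_\kappa/(\kappa-z)$.

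It then remains to identify the polynomial part, which is the step I expect to require the most care. Here I would invoke the continued fraction expansion of Lemma~\ref{lemCF} and analyze its behavior as $z\to\infty$, distinguishing whether or not the base point carries mass. If $x_1>a$, then $l_1>0$ and a dominant-balance argument along the tails (each level $-l_n z+(\text{tail})$ behaves like $-l_n z$ whenever $l_n>0$, so that its reciprocal vanishes at infinity) shows $m(z)\to 0$, consistent with $\dip(\lbrace a\rbrace)=\omega(\lbrace a\rbrace)=0$. If $x_1=a$, then $l_1=0$ and the continued fraction reduces to $m(z)=q_1(z)+\oo(1)$, where $q_1(z)=\omega_1+z\dip_1=\omega(\lbrace a\rbrace)+z\dip(\lbrace a\rbrace)$ because $\cosh^2(0)=1$ and the next level again satisfies $l_2>0$. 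In either case $m(z)-\left(z\dip(\lbrace a\rbrace)+\omega(\lbrace a\rbrace)\right)\to 0$, so the polynomial part is precisely $z\dip(\lbrace a\rbrace)+\omega(\lbrace a\rbrace)$.

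Assembling the polynomial part together with the residues yields the claimed expansion~\eqref{eqnmIRep}. Finally, the Herglotz--Nevanlinna property follows because this expansion has the form $\alpha z+\beta+\sum_t \nrc_t/(t-z)$ with nonnegative leading coefficient $\alpha=\dip(\lbrace a\rbrace)$, real $\beta=\omega(\lbrace a\rbrace)$, and strictly positive residues $\nrc_\kappa>0$ (by~\eqref{eqnnrc}) and $\tfrac12\coth(\per/2)>0$; alternatively, it can be seen directly from the continued fraction of Lemma~\ref{lemCF}, since $l_n\geq 0$ and $\dip_n\geq 0$ force the continued fraction to map the upper half-plane into itself.
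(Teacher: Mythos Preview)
Your proof is correct and uses essentially the same ingredients as the paper: the continued fraction expansion of Lemma~\ref{lemCF} for the behavior at infinity, and the identities~\eqref{eqnsdot} and~\eqref{eqnnrc2} (equivalently~\eqref{eqnmres}) for the residues at the Dirichlet eigenvalues. The only organizational difference is that the paper first establishes the Herglotz--Nevanlinna property directly from the integral identity~\eqref{eqnDEint} (computing $(m(z)-m(z)^\ast)/(z-z^\ast)$ as a positive quadratic form) and then reads off the coefficients, whereas you first compute the full partial fraction expansion and then infer the Herglotz property from its explicit form; both routes are valid and lead to the same result.
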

  
\begin{proof}
 By setting $f=s_\per(z,\redot)$, $h=z^\ast s_\per(z^\ast,\redot)$, $x=a$ and $y=a+\per$ in~\eqref{eqnDEint}, we get
\begin{align*}
  \frac{m(z) - m(z)^\ast}{z-z^\ast}  
  & =  \int_a^{a+\per} \left|\frac{s_\per'(z,x)}{zs_\per(z,a+\per)}\right|^2 dx +\frac{1}{4} \int_a^{a+\per} \left|\frac{s_\per(z,x)}{zs_\per(z,a+\per)}\right|^2 dx \\ 
          & \qquad\qquad\qquad\qquad\qquad\quad + \int_{a}^{a+\per} \left|\frac{s_\per(z,x)}{s_\per(z,a+\per)}\right|^2 d\dip(x), \quad z\in\C\backslash\R,
 \end{align*}
 which shows that $m$ is a Herglotz--Nevanlinna function and it remains to verify the individual coefficients in the representation~\eqref{eqnmIRep}. 
 To this end, we first infer 
 \begin{align*}
  zm(z) =  z\,\omega(\lbrace a\rbrace) +  z^2\dip(\lbrace a\rbrace) + \OO(1), \qquad |z|\rightarrow\infty,  
 \end{align*}
 from the continued fraction expansion~\eqref{eqnContFrac} in Lemma~\ref{lemCF}. 
 We are left to determine the residues of the poles of $m$, for which we employ the identities   
 \begin{align}\label{eqnmres}
  \frac{c(0,a+\per)}{s(0,a+\per)} & = \frac{1}{2}\coth(\per/2), & \dot{s}(\kappa,a+\per) s'(\kappa,a+\per) & = \frac{1}{\kappa\nrc_\kappa}, \quad \kappa\in\sigma,
 \end{align}
 where the latter follows from~\eqref{eqnsdot}. 
 \end{proof}
 
\begin{remark}\label{rem:muinfty}
The former results about the Weyl--Titchmarsh function $m$ allow us to determine precisely when outermost gaps contain Dirichlet eigenvalues. 
To this end, we first infer from the partial fraction expansion that  
\begin{align}\label{eqnmperasym}
  - \frac{2\Delta(z) \mp 2}{z s(z,a+\per)} = z\,\dip(\lbrace a\rbrace) + \omega(\lbrace a\rbrace) + \oo(1), \qquad |z|\rightarrow \infty.
\end{align}
By comparing the degrees of the polynomials in the numerator and the denominator, this shows that $\dip(\lbrace a\rbrace)$ is non-zero if and only if both outermost gaps exist and do not contain Dirichlet eigenvalues. 
 In this case we have 
 \begin{align}
  \dip(\lbrace a\rbrace) = - \frac{\cosh(\per/2)\mp1}{\sinh(\per/2)} \frac{\prod_{\kappa\in\sigma} \kappa}{\prod_{\lambda\in\sigma_\pm} \lambda}. 
 \end{align}
 Moreover, if $\dip(\lbrace a\rbrace)$ is zero,  then $\omega(\lbrace a\rbrace)$ is positive if and only if an outermost positive gap exists and does not contain a Dirichlet eigenvalue and $\omega(\lbrace a\rbrace)$ is negative if and only if an outermost negative gap exists and does not contain a Dirichlet eigenvalue. 
 This follows upon noting that the sign of the fraction for large real $z$ is determined by the numbers of positive and negative Dirichlet eigenvalues in relation to the numbers of positive and negative periodic/antiperiodic eigenvalues. 
 In these cases we have 
 \begin{align}
  \omega(\lbrace a\rbrace) = \frac{\cosh(\per/2)\mp1}{\sinh(\per/2)} \frac{\prod_{\kappa\in\sigma} \kappa}{\prod_{\lambda\in\sigma_\pm} \lambda}. 
 \end{align}
 Finally, let us also note that neither $\omega$ nor $\dip$ have mass in $a$ if and only if every gap contains a Dirichlet eigenvalue. 
 In this case we have 
  \begin{align}
    \frac{1}{2}\coth\left(\frac{x_{1}-a}{2}\right) + \frac{1}{2}\coth\left(\frac{a+\per - x_{N}}{2}\right) & =  \frac{\cosh(\per/2)\mp1}{\sinh(\per/2)} \frac{\prod_{\kappa\in\sigma} \kappa}{\prod_{\lambda\in\sigma_\pm} \lambda}, 
 \end{align}
 except for the trivial case when $\omega$ and $\dip$ vanish identically. 
\end{remark}

\section{Trace formulas}\label{sec:Trace}
 
 With the notation from the previous sections, we are now going to collect some trace formulas, which provide relations between the pair $(u,\mu)$ and the periodic/antiperiodic as well as the Dirichlet spectrum. 
 To this end, let us first enumerate the non-decreasing sequence of periodic/antiperiodic eigenvalues (including multiplicities) as  $\lambda_{i}^\pm$ with index $i\in\inds$.  
  By means of the product representation 
   \begin{align}\label{eqnDeltaprod}
   \Delta(z)^2 - 1 = \sinh^2(\per/2) \prod_{i\in\inds} \biggl(1-\frac{z}{\lambda_{i}^+}\biggr) \biggl(1-\frac{z}{\lambda_{i}^-}\biggr), \quad z\in\C,
  \end{align}   
 for the polynomial $\Delta^2-1$, we are now going to derive trace formulas which will reappear as conserved quantities for periodic multi-peakon solutions. 
 
  \begin{proposition}\label{proptraceformulas}
   The first two trace formulas are
   \begin{align}
    \sum_{i\in\inds}\frac{1}{\lambda_{i}^+} + \frac{1}{\lambda_{i}^-} & = 2 \coth(\per/2)\int_a^{a+\per} u(x) dx, \\
    \sum_{i\in\inds} \frac{1}{(\lambda_{i}^{+})^2} + \frac{1}{(\lambda_{i}^-)^2} & = \frac{2}{\sinh^2(\per/2)} \biggl(\int_a^{a+\per} u(x)dx\biggr)^2 + 4\coth(\per/2) \int_a^{a+\per} d\mu. 
 \end{align}
 \end{proposition}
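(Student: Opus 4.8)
The plan is to read off both trace formulas from the low-order Taylor coefficients of $\Delta^2-1$ at $z=0$, comparing the product representation~\eqref{eqnDeltaprod} with the expansion of $\Delta$ coming from the monodromy matrix. Setting $s_k=\sum_{i\in\inds}(\lambda_i^+)^{-k}+(\lambda_i^-)^{-k}$, a logarithmic expansion of~\eqref{eqnDeltaprod} yields
\[
 \Delta(z)^2-1=\sinh^2(\per/2)\Bigl(1-s_1 z+\tfrac{s_1^2-s_2}{2}z^2+\OO(z^3)\Bigr),\qquad z\to0,
\]
whereas $\Delta(0)=\cosh(\per/2)$ by~\eqref{eqnCSatzero} gives
\[
 \Delta(z)^2-1=\sinh^2(\per/2)+2\cosh(\per/2)\dot\Delta(0)\,z+\bigl(\dot\Delta(0)^2+\cosh(\per/2)\ddot\Delta(0)\bigr)z^2+\OO(z^3).
\]
Matching the coefficients of $z$ and $z^2$ then reduces the two trace formulas to computing $\dot\Delta(0)$ and $\ddot\Delta(0)$ respectively.

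For the first formula I would evaluate~\eqref{eqnDeltadot} at $z=0$, where $\rho_0=\omega$. The explicit solutions~\eqref{eqnCSatzero} satisfy $c(0,a+\per)=s'(0,a+\per)=\cosh(\per/2)$, so the last summand in~\eqref{eqnDeltadot} drops out, while in the first two summands the integrands combine via $\sinh^2-\cosh^2=-1$ to give
\[
 \dot\Delta(0)=-\sinh(\per/2)\int_a^{a+\per}d\omega=-\sinh(\per/2)\int_a^{a+\per}u(x)\,dx,
\]
the second equality being $\int_a^{a+\per}d\omega=\int_a^{a+\per}u\,dx$, which follows from $\omega=u-u''$ and the periodicity of $u'$. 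Substituting this into the $z$-coefficient match gives $s_1=2\coth(\per/2)\int_a^{a+\per}u\,dx$, the first trace formula.

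For the second formula I would differentiate~\eqref{eqnDeltadot} once more and evaluate at $z=0$; the summand with prefactor $c(z,a+\per)-s'(z,a+\per)$ again disappears because this prefactor vanishes at $z=0$. What survives involves, besides the $z=0$ values of the monodromy entries, their first $z$-derivatives $\dot c(0,a+\per)$, $\dot s(0,a+\per)$, $\dot c'(0,a+\per)$, $\dot s'(0,a+\per)$ --- each an integral against $\omega$ of the form already obtained in the proofs of Lemma~\ref{lemDelta} and Proposition~\ref{prop:s}, such as $\dot c(z,a+\per)=\int_a^{a+\per}c\,s_\per\,d\rho_z$ and $\dot s(z,a+\per)=\int_a^{a+\per}s\,s_\per\,d\rho_z$ --- as well as the derivatives of the three integrals in~\eqref{eqnDeltadot}. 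When differentiating the latter, two contributions arise: from the integrands, which produce the functions $\dot c(0,\redot)$ and $\dot s(0,\redot)$ (themselves single integrals against $\omega$), and from the measure $\rho_z=\omega+2z\dip$, whose $z$-derivative contributes a clean factor $2\dip$. The aim is to arrive at
\[
 \ddot\Delta(0)=\cosh(\per/2)\Bigl(\int_a^{a+\per}u\,dx\Bigr)^2-2\sinh(\per/2)\int_a^{a+\per}d\mu,
\]
because inserting this together with the value of $\dot\Delta(0)$ into the $z^2$-match collapses, after the identity $\cosh^2-\sinh^2=1$, exactly to $s_2=\tfrac{2}{\sinh^2(\per/2)}(\int_a^{a+\per}u\,dx)^2+4\coth(\per/2)\int_a^{a+\per}d\mu$.

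The main obstacle is the identification of the $\int_a^{a+\per}d\mu$ term inside $\ddot\Delta(0)$. By~\eqref{eqnDefdip} this splits as $\int_a^{a+\per}d\mu=\int_a^{a+\per}d\dip+\int_a^{a+\per}(u^2+u'^2)\,dx$; the first part is precisely what the $2\dip$-contribution of $\rho_z$ supplies, while the absolutely continuous part must be manufactured out of the quadratic-in-$\omega$ terms. These are of two kinds --- genuine double integrals against $\omega$ (coming from the products $s(0,\redot)\dot s(0,\redot)$ and $c(0,\redot)\dot c(0,\redot)$) and products of two single $\omega$-integrals --- and the task is to reorganize their sum, via Fubini and integration by parts~\eqref{eqnPI}, into the combination $\cosh(\per/2)(\int u\,dx)^2-2\sinh(\per/2)\int(u^2+u'^2)\,dx$. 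The bridge here is the periodicity identity $\int_a^{a+\per}(u^2+u'^2)\,dx=\int_a^{a+\per}u\,d\omega$, which follows from $\omega=u-u''$ upon integration by parts, the boundary term $u'u|_a^{a+\per}$ vanishing by periodicity; after the reorganization the double integrals amount precisely to $u$ integrated against $d\omega$. This hyperbolic bookkeeping is the one genuinely laborious part of the argument, everything else being the elementary expansion described above.
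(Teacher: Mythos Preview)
Your plan is correct and coincides with the paper's: both compute $\dot\Delta(0)$ and $\ddot\Delta(0)$ from~\eqref{eqnDeltadot} and compare with the Taylor coefficients of the product~\eqref{eqnDeltaprod} at $z=0$. Your value of $\dot\Delta(0)$ and your target value for $\ddot\Delta(0)$ agree exactly with what the paper obtains.

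One correction, however: the claim that the third summand of~\eqref{eqnDeltadot} (the one with prefactor $c(z,a+\per)-s'(z,a+\per)$) contributes nothing to $\ddot\Delta(0)$ is false. By the product rule its contribution at $z=0$ is
\[
\frac{\dot c(0,a+\per)-\dot s'(0,a+\per)}{2}\int_a^{a+\per} c(0,x)\,s(0,x)\,d\omega(x),
\]
and from the identities~\eqref{eqnFSdotzero} (using periodicity of $u$ and $u'$) one finds $\dot c(0,a+\per)-\dot s'(0,a+\per)=-4\sinh(\per/2)\,u'(a)$, which need not vanish. You in fact list $\dot c(0,a+\per)$ and $\dot s'(0,a+\per)$ among the surviving ingredients, so this may just be an imprecisely worded sentence; but when you carry out the ``hyperbolic bookkeeping'' you must keep this term. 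The paper records all four formulas in~\eqref{eqnFSdotzero} precisely because all four monodromy derivatives enter, including those coming from differentiating the prefactor of the third summand.
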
 
 
 \begin{proof}
  Differentiating~\eqref{eqnDeltaprod} with respect to $z$ and then evaluating at zero gives 
  \begin{align*}
    2\cosh(\per/2)\dot{\Delta}(0) = -\sinh^2(\per/2)\sum_{i\in\inds}\frac{1}{\lambda_{i}^+} + \frac{1}{\lambda_{i}^-}.
  \end{align*}
  In order to verify the first trace formula, it remains to employ~\eqref{eqnDeltadot} to obtain 
  \begin{align}
   \dot{\Delta}(0) 
                               =  -\sinh(\per/2) \int_a^{a+\per} u(x) dx.  
  \end{align}
  Furthermore, after a rather long computation we also get   
  \begin{align}
   \ddot{\Delta}(0) = \cosh(\per/2) \biggl(\int_a^{a+\per} u(x)dx\biggr)^2 - 2 \sinh(\per/2) \int_a^{a+\per} d\mu
  \end{align}
  upon first differentiating~\eqref{eqnDeltadot} with respect to $z$ and using the identities  
  \begin{subequations}\label{eqnFSdotzero}
  \begin{align}\begin{split}\label{eqncdotzero}
   \dot{c}(0,x) & = - \int_a^x 2\sinh\left(\frac{x-s}{2}\right) \cosh\left(\frac{s-a}{2}\right) d\omega(s) \\
    & = \cosh\left(\frac{x-a}{2}\right) \left(u(x)-u(a)\right) \\
    & \quad\qquad\qquad -  \sinh\left(\frac{x-a}{2}\right) \left(2u'(a) + \int_a^x u(s)ds\right), \quad x\in\R,
   \end{split}\end{align}
   \begin{align}\begin{split}
   \dot{c}'(0,x) & = - \int_a^x \cosh\left(\frac{x-s}{2}\right) \cosh\left(\frac{s-a}{2}\right) d\omega(s) \\
                      & = \cosh\left(\frac{x-a}{2}\right) \left(u'(x)-u'(a)-\frac{1}{2} \int_a^x u(s)ds\right) \\ 
                      & \qquad\qquad\qquad\qquad - \sinh\left(\frac{x-a}{2}\right) \frac{u(x)+u(a)}{2}, \quad x\in\R, 
   \end{split}\end{align}
   \begin{align}\begin{split}\label{eqnsdotzero}
   \dot{s}(0,x) & = -  \int_a^x 2\sinh\left(\frac{x-s}{2}\right) 2\sinh\left(\frac{s-a}{2}\right) d\omega(s) \\
                      & = 2\sinh\left(\frac{x-a}{2}\right) \left(u(x)+u(a)\right) \\
                      & \qquad\qquad\qquad\qquad - 2 \cosh\left(\frac{x-a}{2}\right) \int_a^x u(s)ds, \quad x\in\R, 
   \end{split}\end{align}
   \begin{align}\begin{split}
   \dot{s}'(0,x) & = - \int_a^x \cosh\left(\frac{x-s}{2}\right) 2\sinh\left(\frac{s-a}{2}\right) d\omega(s) \\
                      & = 2\sinh\left(\frac{x-a}{2}\right) \left( u'(x) - \frac{1}{2} \int_a^x u(s)ds \right) \\ 
                      & \qquad\qquad\qquad\qquad - \cosh\left(\frac{x-a}{2}\right) \left(u(x)-u(a)\right), \quad x\in\R.
  \end{split}\end{align}
  \end{subequations}
  Evaluating the second derivative of~\eqref{eqnDeltaprod} at zero gives the second trace formula. 
 \end{proof}
 
  Let us mention that we also easily obtain trace formulas for the periodic spectrum and trace formulas for the antiperiodic spectrum alone, by using the method employed in the proof of Proposition~\ref{proptraceformulas}. 
   
  In a similar manner, we are now going to exploit the product representation\footnote{We employ the convention that a fraction with $\pm\infty$ in the denominator is regarded to be zero.}
 \begin{align}\label{eqnF}
  s(z,a+\per) = 2 \sinh(\per/2) \prod_{i\in\inds} \biggl( 1-\frac{z}{\kappa_i}\biggr), \quad z\in\C,
 \end{align}
 to derive further identities that also involve the Dirichlet spectrum. 
 
 \begin{proposition}\label{propTID}
  We have the identities 
 \begin{align}\label{eqnDtrace}
    \frac{1}{4} \sum_{i\in\inds} \frac{1}{\lambda_{i}^+} +  \frac{1}{\lambda_{i}^-} - \frac{2}{\kappa_{i}} & = u(a), & 
   \frac{1}{16} \sum_{i\in\inds} \frac{1}{(\lambda_{i}^+)^2} +  \frac{1}{(\lambda_{i}^-)^2} - \frac{2}{\kappa_{i}^2} & = P(a), 
\end{align}
where $P$ is the periodic function given by 
  \begin{align}
  P(x) = \frac{1}{4}  \int_\R \E^{-|x-s|} u(s)^2 ds + \frac{1}{4} \int_\R \E^{-|x-s|} d\mu(s), \quad x\in\R.
 \end{align}
 \end{proposition}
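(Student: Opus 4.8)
The plan is to derive, for the Dirichlet eigenvalues $\kappa_i$, power-sum identities analogous to the trace formulas of Proposition~\ref{proptraceformulas}, and then combine the two. The natural source for the $\kappa_i$-sums is the product representation~\eqref{eqnF}: taking its logarithmic derivative gives
\begin{align*}
 \frac{\dot{s}(z,a+\per)}{s(z,a+\per)} = -\sum_{i\in\inds}\frac{1}{\kappa_i-z}, \quad z\in\C\backslash\sigma,
\end{align*}
so that the Taylor coefficients of this quotient at $z=0$ encode exactly $-\sum_{i\in\inds}\kappa_i^{-1}$ at first order and (together with the square of the first coefficient) $-\sum_{i\in\inds}\kappa_i^{-2}$ at second order.

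For the first identity I would read off the coefficient of $z$. Using $s(0,a+\per)=2\sinh(\per/2)$ from~\eqref{eqnCSatzero}, the explicit formula~\eqref{eqnsdotzero} evaluated at $x=a+\per$, and the periodicity $u(a+\per)=u(a)$, one obtains
\begin{align*}
 \dot{s}(0,a+\per) = 4\sinh(\per/2)\,u(a) - 2\cosh(\per/2)\int_a^{a+\per} u(s)\,ds,
\end{align*}
and hence $\sum_{i\in\inds}\kappa_i^{-1} = -2u(a) + \coth(\per/2)\int_a^{a+\per}u(s)\,ds$. Inserting this together with the first trace formula of Proposition~\ref{proptraceformulas} into the left-hand side of the first equation in~\eqref{eqnDtrace} makes the integral terms cancel and leaves precisely $u(a)$.

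For the second identity I would proceed at second order. Comparing Taylor coefficients of $\log\bigl(s(z,a+\per)/s(0,a+\per)\bigr)$ yields
\begin{align*}
 \sum_{i\in\inds}\frac{1}{\kappa_i^2} = \left(\frac{\dot{s}(0,a+\per)}{s(0,a+\per)}\right)^{\!2} - \frac{\ddot{s}(0,a+\per)}{s(0,a+\per)},
\end{align*}
so the remaining task is to compute $\ddot{s}(0,a+\per)$. This I would get by differentiating the identity~\eqref{eqnsdot} once more with respect to $z$, keeping in mind that $\rho_z = \omega + 2z\dip$, so that an extra term $2\int_a^{a+\per} s(0,x)s_\per(0,x)\,d\dip(x)$ appears. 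The remaining contributions require the explicit functions $s(0,x)=2\sinh((x-a)/2)$ and $s_\per(0,x)=2\sinh((x-a-\per)/2)$ together with their first $z$-derivatives at zero, obtained exactly as in~\eqref{eqnFSdotzero}. Combining the resulting expression with the second trace formula of Proposition~\ref{proptraceformulas} should once again cancel the spurious integral terms and reproduce $P(a)$.

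The main obstacle is precisely this evaluation of $\ddot{s}(0,a+\per)$, the counterpart of the ``rather long computation'' behind $\ddot{\Delta}(0)$ in Proposition~\ref{proptraceformulas}. The delicate point is not the differentiation but the reassembly: the double integrals over $[a,a+\per)$ with products of the hyperbolic kernels $\cosh$ and $\sinh$ must be reorganized, using the periodicity of $u$ and the hyperbolic addition formulas, into the single convolution $\frac{1}{4}\int_\R \E^{-|a-s|}(\,\cdot\,)$ that defines $P$. In particular, the $\dip$-term arising from differentiating $\rho_z$ is what supplies the singular part of $P(a)$, recalling that $d\mu = d\dip + (u(s)^2+u'(s)^2)\,ds$, while the products involving $\dot{s}(0,\redot)$ and $s_\per(0,\redot)$ account for the absolutely continuous contribution $\frac{1}{4}\int_\R\E^{-|a-s|}u(s)^2\,ds$.
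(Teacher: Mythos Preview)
Your approach is essentially the same as the paper's: differentiate the product representation~\eqref{eqnF} (equivalently, its logarithm) once and twice at $z=0$, compute $\dot{s}(0,a+\per)$ and $\ddot{s}(0,a+\per)$ from~\eqref{eqnsdot} and~\eqref{eqnFSdotzero}, and then combine with Proposition~\ref{proptraceformulas} so that the bulk integral terms cancel. The one place where the paper streamlines the ``reassembly'' you describe is that, rather than reorganizing the hyperbolic double integrals directly into the convolution defining $P$, it simply records that $P$ is the periodic solution of $P-P''=\tfrac{1}{2}(u^2+\mu)$; recognizing this ODE is what lets the various $\omega$- and $\dip$-contributions collapse cleanly into $16\sinh(\per/2)\,P(a)$.
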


\begin{proof}
 Upon employing~\eqref{eqnsdotzero} to compute
 \begin{align*}
  \dot{s}(0,a+\per) = 4\sinh(\per/2) u(a) - 2\cosh(\per/2) \int_a^{a+\per} u(x)dx, 
 \end{align*}
 the first identity follows by differentiating~\eqref{eqnF} with respect to $z$, evaluating at zero and taking into account the first trace formula in Proposition~\ref{proptraceformulas}. 
 After a much longer calculation, differentiating~\eqref{eqnsdot} with respect to $z$, evaluating at zero and plugging in~\eqref{eqnFSdotzero}, we eventually end up with 
  \begin{align*}
    \ddot{s}(0,a+\per) & = 2\sinh(\per/2)\biggl(\int_a^{a+\per} u(x)dx\biggr)^2 - 8 \cosh(\per/2) u(a)\int_a^{a+\per} u(x)dx \\ 
     & \qquad\quad - 4\cosh(\per/2) \int_a^{a+\per} d\mu  + 8 \sinh(\per/2)u(a)^2 + 16 \sinh(\per/2)P(a),
 \end{align*}
 where we also note that the function $P$ is a solution of the differential equation
  \begin{align*}
  P - P'' =\frac{u^2+\mu}{2}. 
 \end{align*} 
 Evaluating the second derivative of~\eqref{eqnF} at zero then gives the second identity.
\end{proof}

\section{Inverse spectral problems}

 The main purpose of the present section is to solve the inverse problem for the periodic/antiperiodic spectrum, based on the solution of the inverse problem for the Dirichlet spectrum on the interval $[a,a+\per)$ for a fixed base point $a\in\R$. 
    
  \begin{theorem}\label{thmIP}
   Let $\sigma$ be a finite set of nonzero reals and for each $\kappa\in\sigma$ let $\nrc_\kappa$ be a positive number.
    Then for every $\omega_a\in\R$ and $\dip_a\geq0$ there is a unique pair $(u,\mu)$ in $\Pe$ such that the associated Dirichlet spectrum coincides with $\sigma$, the norming constants are $\nrc_\kappa$ for each $\kappa\in\sigma$ as well as $\omega(\lbrace a\rbrace)=\omega_a$ and $\dip(\lbrace a\rbrace)=\dip_a$.
  \end{theorem}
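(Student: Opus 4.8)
The plan is to use the Weyl--Titchmarsh function $m$ as the bridge between the spectral data and the coefficients. Given the prescribed data, I would first \emph{define} a candidate function $m$ by the partial fraction formula of Lemma~\ref{lemmherglotz}, namely
\begin{align*}
 m(z) = z\,\dip_a + \omega_a - \frac{1}{2z}\coth(\per/2) + \sum_{\kappa\in\sigma}\frac{\nrc_\kappa}{\kappa-z}.
\end{align*}
Since $\dip_a\geq0$, each $\nrc_\kappa>0$ and $\coth(\per/2)>0$, this is a rational Herglotz--Nevanlinna function whose real poles are exactly $\sigma\cup\lbrace0\rbrace$, with the residue at the origin fixed to the value forced by the period $\per$. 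The entire strategy is then to reverse the two representations of $m$: the data determines $m$ through Lemma~\ref{lemmherglotz}, and $m$ should in turn determine the coefficients $\omega$ and $\dip$ through the continued fraction of Lemma~\ref{lemCF}, whence the pair $(u,\mu)$ (which can always be recovered uniquely from $\omega$ and $\dip$).

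The core step is to expand this $m$ into the finite continued fraction~\eqref{eqnContFrac}. I would run the recursion implicit in the proof of Lemma~\ref{lemCF} in reverse: extract $l_1$ from the behaviour of $(zm(z))^{-1}$ at infinity, subtract the corresponding term, invert, extract the affine polynomial $q_1(z)$, and continue, alternately peeling off monomials $-l_n z$ and affine polynomials $q_n(z)$. At each stage the remainder is again of Herglotz type (the operation $w\mapsto -1/w$ and the subtraction of the real and affine parts preserve the relevant class) but of strictly smaller degree, and it is precisely this preservation that should force the signs to come out correctly: all $l_n$ turn out nonnegative, with $l_1=0$ exactly when $|\omega_a|+\dip_a>0$, i.e.\ when the base point carries mass, and all leading coefficients of the $q_n$ nonnegative, so that the recovered $\dip_n\geq0$. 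The prescribed residue of $m$ at zero guarantees two further things: that the expansion terminates with a pure monomial $-l_{N+1}z$ rather than a further affine block, and that $\sum_{n} l_n = 2\tanh(\per/2)$. Inverting~\eqref{eqnan} then produces strictly increasing points $x_1<\dots<x_N$ inside one period $[a,a+\per)$, and inverting~\eqref{eqnbn} produces the weights $\omega_n\in\R$ and $\dip_n\geq0$ with $|\omega_n|+\dip_n>0$. Assembling the measures $\omega$ and $\dip$ as in~\eqref{eqnMEA} and passing to $(u,\mu)$ yields the required element of $\Pe$, the nonnegativity of $\dip$ being exactly what places the pair in $\D$ and the finiteness of the support in each period what places it in $\Pe$.

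It then remains to check that this $(u,\mu)$ reproduces the data and that it is unique. For the former, the Weyl--Titchmarsh function of the constructed pair equals $m$ by Lemma~\ref{lemCF} (its continued fraction coefficients are by construction the $l_n$ and $q_n$ just read off), so its poles and residues recover $\sigma$ and the $\nrc_\kappa$ via Lemma~\ref{lemmherglotz}, while its leading behaviour recovers $\omega_a$ and $\dip_a$ as in Remark~\ref{rem:muinfty}. Uniqueness is then immediate: $m$ is uniquely determined by the data, the continued fraction expansion of a rational function is unique, and $(u,\mu)$ is determined by the resulting $l_n$ and $q_n$.

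I expect the main obstacle to be precisely the core step: proving that the continued fraction expansion of an arbitrary rational Herglotz--Nevanlinna function of the above shape genuinely exists with all the required sign and structural properties, and not merely formally. Concretely, one must show that peeling off $-l_n z$ and $q_n(z)$ keeps the remainder in the correct subclass, that the $l_n$ stay nonnegative and the $\dip_n$ nonnegative throughout the recursion, and that the prescribed pole at zero with the correct residue makes the algorithm close up as a genuine period-$\per$ configuration, terminating in $-l_{N+1}z$ with $\sum_n l_n = 2\tanh(\per/2)$. This amounts to a Stieltjes-type inverse result for these string continued fractions; the Herglotz property of $m$ is the single fact driving all the positivity, and making this rigorous at every stage of the recursion is where the real work lies.
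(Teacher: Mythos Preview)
Your proposal is correct and follows essentially the same route as the paper: define $m$ by the partial fraction formula, obtain its continued fraction expansion~\eqref{eqnContFrac} with the correct signs, recover the $x_n$, $\omega_n$, $\dip_n$ from the coefficients via~\eqref{eqnan}--\eqref{eqnbn} and the identity $\sum_n l_n = 2\tanh(\per/2)$, and argue uniqueness from the uniqueness of the continued fraction. The only difference is that the paper does not carry out the recursive peeling argument you describe for the core step, but instead invokes an external Stieltjes-type result (\cite[Lemma~B]{UniSolCP} or \cite[Corollaries~4.3 and~4.4]{IndMoment}) which guarantees directly that any rational Herglotz--Nevanlinna function of this form admits a continued fraction~\eqref{eqnContFrac} with $l_1\geq0$, $l_n>0$ for $n\geq2$, and $\dot{q}_n(0)\geq0$; you have correctly identified this as the place where the real work lies.
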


 \begin{proof}
 {\em Existence.}
    It follows from \cite[Lemma~B]{UniSolCP} or  \cite[Corollary~4.3 and Corollary~4.4]{IndMoment} that the rational function $m$ defined by    
  \begin{align}\label{eq:mfunction}
    m(z) = z\,\dip_a + \omega_a - \frac{1}{2z} \coth\left(\per/2\right) + \sum_{\kappa\in\sigma} \frac{\nrc_\kappa}{\kappa-z} , \quad z\in\C\backslash\R,
  \end{align}     
 has a finite continued fraction expansion of the form~\eqref{eqnContFrac} for some non-negative integer $N$, some reals $l_1\geq0$ and $l_n>0$ for $n=2,\ldots,N+1$ and some real, nonzero polynomials $q_n$ of degree at most one with $\dot{q}_n(0)\geq0$ for every $n=1,\ldots,N$. 
 Because of the identity 
 \begin{align*}
    \sum_{n=1}^{N+1} l_n = - \lim_{z\rightarrow0} \frac{1}{zm(z)} = 2 \tanh(\per/2), 
 \end{align*}
 it is possible to define strictly increasing points $x_1,\ldots,x_N\in[a,a+\per)$ through 
 \begin{align*}
   2\tanh\left(\frac{x_n-a}{2}\right) =   \sum_{j=1}^{n} l_j, \quad n\in\lbrace 1,\ldots,N\rbrace,
 \end{align*} 
 so that~\eqref{eqnan} holds.
 Furthermore, we define weights $\omega_n\in\R$ and $\dip_n\geq0$ for every $n\in\lbrace 1,\ldots,N\rbrace$ in such a way that equation~\eqref{eqnbn} holds. 
 Upon defining the Borel measures $\omega$ and $\dip$ via~\eqref{eqnMEA} as well as the corresponding pair $(u,\mu)$ in $\Pe$, it follows from Lemma~\ref{lemCF} that the associated Weyl--Titchmarsh function from Section~\ref{sec:Dirichlet} coincides with $m$ by construction. 
 Thus, we see that the pair $(u,\mu)$ has all the claimed properties upon comparing with Lemma~\ref{lemmherglotz}.  
 
  {\em Uniqueness.}
  Lemma~\ref{lemmherglotz} shows that our given data uniquely determines the Weyl--Titchmarsh function from Section~\ref{sec:Dirichlet}.
  Because all the coefficients in the continued fraction expansion~\eqref{eqnContFrac} can be read off from the asymptotics as $|z|\rightarrow\infty$, this implies that the measures $\omega$ and $\dip$ are uniquely determined. 
   \end{proof}
  
  A convenient way to reformulate the previous result is by employing the Weyl--Titchmarsh function for the Dirichlet spectral problem introduced in Section~\ref{sec:Dirichlet}. 
  
  \begin{corollary}\label{corIPm}
  Let $m$ be a rational Herglotz--Nevanlinna function with residue
  \begin{align}
     - \frac{1}{2} \cosh(\per/2) 
  \end{align}
  at zero. 
  Then there is a unique pair $(u,\mu)$ in $\Pe$ such that the function $m$ is the associated Weyl--Titchmarsh function as defined in Section~\ref{sec:Dirichlet}. 
 \end{corollary}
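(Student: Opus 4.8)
The plan is to read off all spectral data directly from the Herglotz--Nevanlinna structure of $m$ and then invoke Theorem~\ref{thmIP}. Every rational Herglotz--Nevanlinna function possesses a unique representation
\begin{align*}
 m(z) = \dip_a\, z + \omega_a - \frac{c}{z} + \sum_{\kappa\in\sigma}\frac{\nrc_\kappa}{\kappa-z}, \quad z\in\C\backslash\R,
\end{align*}
where $\dip_a\geq0$, $\omega_a\in\R$, the set $\sigma$ of nonzero real poles is finite, and all residues are nonnegative, namely $\nrc_\kappa>0$ and $c\geq0$ (with $c>0$ exactly when $0$ is a pole of $m$). The hypothesis that $m$ has the prescribed residue at zero forces $c=\frac{1}{2}\coth(\per/2)$, so that the principal part of $m$ at the origin is precisely the universal term $-\frac{1}{2z}\coth(\per/2)$ occurring in Lemma~\ref{lemmherglotz}.

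Given this decomposition, I would feed the extracted data into Theorem~\ref{thmIP}: the finite set $\sigma$ of nonzero reals as the prescribed Dirichlet spectrum, the numbers $\nrc_\kappa>0$ as the prescribed norming constants, and the coefficients $\omega_a\in\R$ and $\dip_a\geq0$ as the prescribed masses $\omega(\lbrace a\rbrace)$ and $\dip(\lbrace a\rbrace)$. Since the positivity conditions $\dip_a\geq0$ and $\nrc_\kappa>0$ required by Theorem~\ref{thmIP} hold automatically by virtue of the Herglotz--Nevanlinna property, the theorem produces a pair $(u,\mu)$ in $\Pe$ with Dirichlet spectrum $\sigma$, norming constants $\nrc_\kappa$, and $\omega(\lbrace a\rbrace)=\omega_a$, $\dip(\lbrace a\rbrace)=\dip_a$. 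Lemma~\ref{lemmherglotz} then furnishes the partial fraction expansion of the Weyl--Titchmarsh function associated with this pair, and comparing it term by term with the representation above shows that it coincides with $m$.

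For uniqueness, I would observe that if $(u,\mu)$ is any pair in $\Pe$ whose Weyl--Titchmarsh function equals $m$, then Lemma~\ref{lemmherglotz} identifies its Dirichlet spectrum with the nonzero poles of $m$, its norming constants with the corresponding residues, and $\omega(\lbrace a\rbrace)$, $\dip(\lbrace a\rbrace)$ with the constant and linear coefficients of $m$ — exactly the data used above. The uniqueness part of Theorem~\ref{thmIP} then pins down $(u,\mu)$ completely. There is no serious obstacle here, as the statement is essentially a reformulation of Theorem~\ref{thmIP}; the one point deserving care is the role of the residue hypothesis at zero, which is precisely the normalization guaranteeing that $m$ has the shape a Weyl--Titchmarsh function must take near the origin, so that the remaining data can be handed to Theorem~\ref{thmIP} without any further compatibility requirement.
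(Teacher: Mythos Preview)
Your proof is correct and follows essentially the same route as the paper: observe that a rational Herglotz--Nevanlinna function admits a partial fraction representation of the form~\eqref{eq:mfunction}, then invoke Theorem~\ref{thmIP}. The paper's own proof is a single sentence to this effect; you have simply made the existence and uniqueness arguments explicit by spelling out the identification of the data and the appeal to Lemma~\ref{lemmherglotz}.
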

 
 \begin{proof}
  It suffices to notice that as a Herglotz--Nevanlinna function, the function $m$ has a representation of the form~\eqref{eq:mfunction} for some $\dip_a\geq0$, $\omega_a\in\R$, a finite set $\sigma$ of nonzero reals and positive numbers $\nrc_\kappa$ for each $\kappa\in\sigma$. 
 \end{proof}
  
 By means of using Stieltjes-type formulas (see \cite{krla79} or \cite{ConservMP}, \cite{IndMoment}) for the coefficients in the continued fraction expansion in~\eqref{eqnContFrac}, it is possible to recover solutions of the inverse Dirichlet spectral problem explicitly in terms of the spectral data. 
 From this, one obtains the following fact which characterizes the subclass of $\Pe$ that gives rise to purely positive (negative) Dirichlet spectrum. 

\begin{remark}\label{remwvprob} 
Let $(u,\mu)$ be a pair in $\Pe$  with associated Dirichlet spectrum $\sigma$. 
 Then the corresponding Borel measure $\dip$ vanishes on $(a,a+\per)$ and the corresponding Borel measure $\omega$ is non-negative (non-positive) on $(a,a+\per)$ if and only if $\sigma$ is positive (negative). 
 \end{remark}

Based on the solution of the inverse Dirichlet spectral problem, we are now going to solve the periodic inverse spectral problem. 
To this end, let $\Delta$ be a polynomial with only real, non-zero and simple roots, normalized at zero by 
\begin{align}
 \Delta(0) = \cosh(\per/2). 
\end{align}
We will furthermore suppose that $\dot{\Delta}(\lambda)$ can be zero only if $|\Delta(\lambda)|\geq 1$. 
Under these assumptions, we are able to define an index set $\inds$ and intervals $\Gamma_i$ as in Section~\ref{secDSP} as well as an associated torus $\To$ by~\eqref{eqnTodef}.

\begin{theorem}\label{thmPIP}
 For every $\hat{\kappa}\in\To$ there is a unique pair $(u,\mu)$ in $\Pe$ such that the associated Floquet discriminant coincides with $\Delta$ and $\hat{\kappa}$ are the Dirichlet divisors.  
\end{theorem}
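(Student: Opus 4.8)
The plan is to reduce the periodic inverse problem to the Dirichlet inverse problem already solved in Theorem~\ref{thmIP} (equivalently Corollary~\ref{corIPm}). From the prescribed data $(\Delta,\hat{\kappa})$ I will manufacture admissible Dirichlet data $(\sigma,\lbrace\nrc_\kappa\rbrace,\omega_a,\dip_a)$, feed it into Theorem~\ref{thmIP} to produce a pair $(u,\mu)\in\Pe$, and then verify that this pair reproduces $\Delta$ and $\hat{\kappa}$. Uniqueness will follow from the uniqueness part of Theorem~\ref{thmIP} once I argue that every admissible pair must give rise to exactly this Dirichlet data.

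First I would read off the Dirichlet spectrum $\sigma$ as the set of finite first components $\kappa_i$ of $\hat{\kappa}$ and reconstruct the polynomial $s(\redot,a+\per)$ from $\sigma$ via \eqref{eqnF}. Writing $\hat{\kappa}_i=(\kappa_i,\zeta_i)\in\To_i$, the relation $\zeta_i^2=\Delta(\kappa_i)^2-1$ together with $\det M(\kappa_i)=1$ (so that $c(\kappa_i,a+\per)$ and $s'(\kappa_i,a+\per)$ are the two roots of $T^2-2\Delta(\kappa_i)T+1$) and the normalization \eqref{eqnhatkappa} force
\[ c(\kappa_i,a+\per)=\Delta(\kappa_i)+\zeta_i \quad\text{and}\quad s'(\kappa_i,a+\per)=\Delta(\kappa_i)-\zeta_i. \]
The norming constants are then defined through \eqref{eqnmres}, namely $\nrc_{\kappa}=(\Delta(\kappa)+\zeta_\kappa)/(\kappa\,\dot{s}(\kappa,a+\per))$, which is computable purely from the data. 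Their positivity is a sign-chasing argument: on each gap $\Gamma_i$ the discriminant keeps a constant sign with $|\Delta|\ge1$, which fixes the sign of $c(\kappa_i,a+\per)=\Delta(\kappa_i)+\zeta_i$, while the sign of $\kappa_i\,\dot{s}(\kappa_i,a+\per)$ is governed by the position of $\kappa_i$ relative to the remaining Dirichlet eigenvalues; the two always agree. Finally, the base point masses $\omega_a=\omega(\lbrace a\rbrace)$ and $\dip_a=\dip(\lbrace a\rbrace)$ are supplied by the explicit formulas in Remark~\ref{rem:muinfty}, whose input (the outermost periodic/antiperiodic eigenvalues from $\Delta$ and whether the outermost gaps carry a Dirichlet eigenvalue, i.e.\ whether the outermost $\kappa_i$ are finite) is encoded in $(\Delta,\hat{\kappa})$.

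With $\dip_a\ge0$ and $\nrc_\kappa>0$ in hand, Theorem~\ref{thmIP} produces a unique pair $(u,\mu)\in\Pe$ whose Weyl--Titchmarsh function from Section~\ref{sec:Dirichlet} is \eqref{eq:mfunction}, so that its Dirichlet spectrum is $\sigma$, its norming constants are $\nrc_\kappa$, and $\omega(\lbrace a\rbrace)=\omega_a$, $\dip(\lbrace a\rbrace)=\dip_a$. It remains to check that its Floquet discriminant $\tilde{\Delta}$ equals $\Delta$ and that its Dirichlet divisors are $\hat{\kappa}$. Comparing the residue of $m$ at each pole $\kappa_i$ in its two representations \eqref{eq:m_dir} and \eqref{eq:mfunction} recovers $c(\kappa_i,a+\per)=\Delta(\kappa_i)+\zeta_i$; since $c(\kappa_i,a+\per)s'(\kappa_i,a+\per)=1$ and $\zeta_i^2=\Delta(\kappa_i)^2-1$, this yields $s'(\kappa_i,a+\per)=\Delta(\kappa_i)-\zeta_i$, whence $\tilde{\Delta}(\kappa_i)=\Delta(\kappa_i)$ and the divisors match via \eqref{eqnhatkappa}. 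Moreover $c(\redot,a+\per)\big(2\Delta-c(\redot,a+\per)\big)-1$ vanishes at every $\kappa_i$ and is therefore divisible by $s(\redot,a+\per)$; combined with the Wronskian identity $c(\redot,a+\per)s'(\redot,a+\per)\equiv1$ modulo $s(\redot,a+\per)$ and the coprimality of $c(\redot,a+\per)$ and $s(\redot,a+\per)$, this shows that $2(\tilde{\Delta}-\Delta)$ is divisible by $s(\redot,a+\per)$. Since $\tilde{\Delta}-\Delta$ also vanishes at $0$ (both discriminants equal $\cosh(\per/2)$ there by \eqref{eqnCSatzero} and the assumed normalization), it suffices to control degrees.

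The main obstacle is to upgrade this to the polynomial identity $\tilde{\Delta}=\Delta$. Here $\deg\Delta=|\inds|$ (as $\Delta^2-1$ has $2|\inds|$ roots), while $\deg s(\redot,a+\per)=|\sigma|=|\inds|-e$, where $e\in\lbrace0,1,2\rbrace$ is the number of outermost gaps without a Dirichlet eigenvalue. The degree and the top coefficients of $\tilde{\Delta}$ are governed, through the large-$z$ asymptotics \eqref{eqnmperasym} of $m$, by $\dip_a$ and $\omega_a$, and our prescription of these through Remark~\ref{rem:muinfty} is designed precisely so that the leading $e$ coefficients of $\tilde{\Delta}$ coincide with those of $\Delta$, forcing $\deg(\tilde{\Delta}-\Delta)\le|\sigma|$. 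Being divisible by $s(\redot,a+\per)$ of the same degree, $\tilde{\Delta}-\Delta$ is then a constant multiple of $s(\redot,a+\per)$, and as it vanishes at $0$ while $s(0,a+\per)=2\sinh(\per/2)\neq0$, it vanishes identically. This degree bookkeeping is the crux of the argument and exactly the place where the additional measure $\dip$ is indispensable. For uniqueness, any pair in $\Pe$ with Floquet discriminant $\Delta$ and divisors $\hat{\kappa}$ has Dirichlet spectrum $\sigma$, the same norming constants by \eqref{eqnmres}, and the same base point masses by Remark~\ref{rem:muinfty}; the uniqueness statement of Theorem~\ref{thmIP} then identifies it with the pair constructed above.
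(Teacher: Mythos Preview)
Your proposal is correct and follows essentially the same strategy as the paper: construct admissible Dirichlet data $(\sigma,\{\nrc_\kappa\},\omega_a,\dip_a)$ from $(\Delta,\hat{\kappa})$, invoke Theorem~\ref{thmIP}/Corollary~\ref{corIPm}, and verify that the resulting pair has Floquet discriminant $\Delta$. The only cosmetic differences are that the paper defines $\omega_a,\dip_a$ directly from the large-$z$ asymptotics of $-\tfrac{2\Delta(z)-2}{z\varsigma(z)}$ rather than via Remark~\ref{rem:muinfty} (these are the same thing), writes the norming constants as $\nrc_i^{-1}=\kappa_i\dot{\varsigma}(\kappa_i)(\Delta(\kappa_i)-\zeta_i)$ (which equals your $(\Delta(\kappa_i)+\zeta_i)/(\kappa_i\dot{\varsigma}(\kappa_i))$ since $(\Delta(\kappa_i)+\zeta_i)(\Delta(\kappa_i)-\zeta_i)=1$), and checks $\tilde{\Delta}=\Delta$ by matching residues and asymptotics of the rational function $-\tfrac{c+s'-2}{zs}$ against $-\tfrac{2\Delta-2}{z\varsigma}$ rather than by your equivalent divisibility-and-degree count.
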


\begin{proof}
{\em Existence.}
 Let us write $\hat{\kappa}_i = (\kappa_i,\zeta_i)$ for  $i\in\inds$,  define the polynomial $\varsigma$ by 
 \begin{align*}
  \varsigma(z) = 2\sinh(\per/2) \prod_{i\in\inds} \left(1-\frac{z}{\kappa_{i}}\right), \quad z\in\C, 
 \end{align*}
 and constants $\dip_a$, $\omega_a\in\R$ with $\dip_a\geq0$ such that 
 \begin{align*}
  -\frac{2\Delta(z)-2}{z \varsigma(z)} = z\,\dip_a + \omega_a + \oo(1), \qquad |z|\rightarrow\infty.
 \end{align*}
 For all $i\in\inds$ such that $\kappa_i$ is finite, we introduce $\gamma_i$ by 
 \begin{align*}
  \frac{1}{\gamma_i} = \kappa_i \dot{\varsigma}(\kappa_i) \left(\Delta(\kappa_i)-\zeta_i\right).
 \end{align*} 
 Since all these quantities are positive, 
  we infer from Corollary~\ref{corIPm} that there is a pair $(u,\mu)$ in $\Pe$ such that the associated Weyl--Titchmarsh function is given by 
 \begin{align*}
  m(z) = z\,\dip_a + \omega_a - \frac{1}{2z} \cosh(\per/2) + \sum_{i\in\inds} \frac{\gamma_i}{\kappa_i-z}, \quad z\in\C\backslash\R.
 \end{align*}
 Let $c$ and $s$ be the corresponding fundamental system of solutions as in Section~\ref{secDSP}. 
 We note that the polynomial $s(\ledot,a+\per)$ coincides with $\varsigma$ and thus~\eqref{eqnmres} shows
 \begin{align*}
   s'(\kappa_i,a+\per)  = \Delta(\kappa_i) - \zeta_i
 \end{align*}
 when $\kappa_i$ is finite. 
 Now in view of~\eqref{eqnmperasym} we have 
 \begin{align*}
  -\frac{c(z,a+\per) + s'(z,a+\per) - 2}{z s(z,a+\per)} & = z\,\dip_a + \omega_a + \oo(1), \qquad |z|\rightarrow\infty,
 \end{align*}
 and the residue of the function on the left-hand side at every finite $\kappa_i$ is
 \begin{align*}
  -\frac{s'(\kappa_i,a+\per)^{-1} + s'(\kappa_i,a+\per) -2}{\kappa_i \dot{s}(\kappa_i,a+\per)} = -\frac{2\Delta(\kappa_i)-2}{\kappa_i\dot{\varsigma}(\kappa_i)}.
 \end{align*}
 Together, this implies that $\Delta$ is indeed the Floquet discriminant associated with the pair $(u,\mu)$. 
 It follows readily that $\hat{\kappa}$ are the corresponding Dirichlet divisors. 
 
 {\em Uniqueness.}
 We infer from~\eqref{eqnmperasym} that our spectral data uniquely determines the linear term in the representation~\eqref{eqnmIRep} of the Weyl--Titchmarsh function.
 The poles are uniquely determined too, as well as the corresponding residues in view of~\eqref{eqnmres} and the definition~\eqref{eqnhatkappa} of the Dirichlet divisors. 
 Thus so is the Weyl--Titchmarsh function and uniqueness follows from Corollary~\ref{corIPm}. 
\end{proof}

Similar to Remark~\ref{remwvprob}, we are able to relate properties about the pair $(u,\mu)$ to positivity (negativity) of the corresponding periodic/antiperiodic spectrum. 

\begin{remark}
 Let $(u,\mu)$ be a pair in $\Pe$ such that its associated Floquet discriminant coincides with $\Delta$.  
 Then the corresponding Borel measure $\dip$ vanishes identically and the corresponding Borel measure $\omega$ is non-negative (non-positive) if and only if all zeros of the polynomial $\Delta\mp1$ are positive (negative).
\end{remark}

We conclude this section with a result about the isospectral set $\Iso{\Delta}$, which is defined as the collection of all those pairs $(u,\mu)$ in $\Pe$ whose associated Floquet discriminant coincides with the polynomial $\Delta$. 

\begin{corollary}
By means of the mapping 
  \begin{align}\label{eqnHom}
    (u,\mu) & \mapsto \hat{\kappa}
  \end{align} 
 the isospectral set $\Iso{\Delta}$ can be identified with the torus $\To$.
\end{corollary}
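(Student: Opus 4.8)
The final statement to prove is the corollary asserting that the map $(u,\mu)\mapsto\hat{\kappa}$ identifies the isospectral set $\Iso{\Delta}$ with the torus $\To$. My plan is to extract this directly from Theorem~\ref{thmPIP}, which already does the essential work.

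First I would observe that the Dirichlet divisor map of \eqref{eqnHom} genuinely lands in $\To$: for a pair $(u,\mu)$ in $\Iso{\Delta}$ the associated Floquet discriminant is $\Delta$, so the gaps $\Gamma_i$ and the sets $\To_i$ built from $\Delta$ via \eqref{eqnTodef} are exactly those appearing in the definition of the Dirichlet divisors $\hat{\kappa}_i$ in \eqref{eqnhatkappa}. The Lemma locating the Dirichlet spectrum guarantees that each $\kappa_i$ lies in $\Gamma_i$ (taking the value $\pm\infty$ only in an outermost gap without a Dirichlet eigenvalue), and the relation $s'(\kappa_i,a+\per)^2 = \Delta(\kappa_i)^2 - (\Delta(\kappa_i)^2-1)$ coming from $\det M = 1$ forces the second component $\zeta_i = \Delta(\kappa_i)-s'(\kappa_i,a+\per)$ to satisfy $\zeta_i^2 = \Delta(\kappa_i)^2 - 1$, so indeed $\hat{\kappa}_i\in\To_i$ for each $i$ and hence $\hat{\kappa}\in\To$.

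Next I would invoke Theorem~\ref{thmPIP} as a clean statement of bijectivity. That theorem says precisely that for every $\hat{\kappa}\in\To$ there is a \emph{unique} pair $(u,\mu)$ in $\Pe$ whose Floquet discriminant is $\Delta$ and whose Dirichlet divisors are $\hat{\kappa}$. Unravelling this, the existence half shows the map \eqref{eqnHom} restricted to $\Iso{\Delta}$ is surjective onto $\To$, and the uniqueness half shows it is injective. Thus \eqref{eqnHom} is a bijection between $\Iso{\Delta}$ and $\To$, which is the set-theoretic content of the claimed identification.

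The remaining point, and the only place where a little care is genuinely needed, is the topological one: since $\To$ is described as a product of tori (with the outermost factors one-point compactified), the word ``identified'' should mean that \eqref{eqnHom} is a homeomorphism once $\Iso{\Delta}$ is given a suitable topology. I would therefore transport the torus topology on $\To$ back through the bijection to topologize $\Iso{\Delta}$, so that \eqref{eqnHom} becomes a homeomorphism by construction; alternatively one checks continuity directly from the fact that the pair $(u,\mu)$ depends continuously on the Weyl--Titchmarsh data (residues $\gamma_i$ and poles $\kappa_i$) exhibited in the proof of Theorem~\ref{thmPIP}, and conversely that $\hat{\kappa}$ depends continuously on $(u,\mu)$ through the polynomials $s(\ledot,a+\per)$ and $s'(\ledot,a+\per)$. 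I expect the main (albeit mild) obstacle to be the behaviour at the outermost gaps, where $\kappa_i$ may pass through $\pm\infty$: here one must verify that the one-point compactification in the definition of $\To_i$ matches the corresponding degeneration of $(u,\mu)$ described in Remark~\ref{rem:muinfty}, so that continuity persists across these boundary points and the identification is a genuine homeomorphism of tori.
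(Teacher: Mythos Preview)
Your approach is essentially the paper's: the paper's entire proof is the single line ``This follows immediately from Theorem~\ref{thmPIP},'' and your second paragraph is exactly that argument spelled out. The well-definedness check in your first paragraph is something the paper takes for granted via~\eqref{eqnhatkappa}, and your topological discussion in the third paragraph goes beyond what the paper actually proves (the paper is content with the set-theoretic bijection).

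One small slip: the displayed relation $s'(\kappa_i,a+\per)^2 = \Delta(\kappa_i)^2 - (\Delta(\kappa_i)^2-1)$ is garbled (it literally says $(s')^2=1$). What you want is that at a Dirichlet eigenvalue $s(\kappa_i,a+\per)=0$, so $\det M=1$ gives $c(\kappa_i,a+\per)\,s'(\kappa_i,a+\per)=1$, whence
\[
\zeta_i^2=\left(\tfrac{c-s'}{2}\right)^2=\left(\tfrac{c+s'}{2}\right)^2-cs'=\Delta(\kappa_i)^2-1.
\]
With that correction the argument is fine.
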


\begin{proof}
This follows immediately from Theorem~\ref{thmPIP}.
\end{proof}

\end{document}